\documentclass{amsart}
\pdfoutput=1
\usepackage{amsmath,amssymb,amsthm}\usepackage[mathscr]{eucal}
\usepackage[dvipsnames]{xcolor}
\usepackage{amscd}\usepackage{setspace}\usepackage{epsfig}\usepackage{color}
\usepackage{multicol}\usepackage{dsfont}\usepackage{marvosym}
\usepackage{wasysym}\usepackage{mathdots}\usepackage{bbold} 
\usepackage{tikz}
\usepackage{tensor}
\usetikzlibrary{matrix}

\newtheorem*{maintheorem}{Main theorem}
\newtheorem*{ctlemma}{Crossing tube lemma}
\newtheorem*{bigonmove}{Bigon move}
\newtheorem{theorem}{Theorem}[subsection]
\newtheorem{lemma}[theorem]{Lemma}

\newtheorem{prop}[theorem]{Proposition}

\theoremstyle{definition}

\setlength{\paperwidth}{8.5in}\setlength{ \textwidth}{6.5in}
\oddsidemargin=0in\evensidemargin=0in
\addtolength{\textheight}{0in}\addtolength{\topmargin}{0in}
\setlength{\parindent}{0pt}\setlength{\parskip}{12pt}
\columnsep=0in\columnseprule=0in

\begin{document}

\title{Alternating links have representativity 2} 

\author[T. Kindred]{Thomas Kindred}
\address{Department of Mathematics, University of Iowa,
Iowa City, IA 52242-1419, USA}
\email{thomas-kindred@uiowa.edu}

\date{\today}

\maketitle

\begin{abstract}  
We prove that if $L$ is a non-trivial alternating link embedded (without crossings) in a closed surface $F\subset S^3$, then $F$ has a compressing disk whose boundary intersects $L$ in no more than two points.  Moreover, whenever the surface is incompressible and $\partial$-incompressible in the link exterior, it can be isotoped to have a standard tube at some crossing of any reduced alternating diagram.  
\end{abstract}

\section{Introduction}\label{S:1}

Following Ozawa \cite{ozawa1,ozawa2}, the \textbf{representativity} $r(L)$ of a link $L\subset S^3$ is: 
\[r(L)=\max_{F\in\mathcal{F}_L}\min_{X\in\mathcal{X}_F}|\partial X\cap L|,\]
where $\mathcal{F}_L$ is the set of positive genus, closed surfaces $F\subset S^3$ containing $L$, and a ``closed surface'' is compact and connected without boundary;  $\mathcal{X}_F$ is the set of compressing disks for $F$ in $S^3$; and $|\partial X\cap L|$ is the number of connected components (i.e. points) of $\partial X\cap L$. This notion extends the earlier notion of representativity from graph theory. In 2011 \cite{pardon}, Pardon applied representativity (although he did not use this term) to answer a question posed by Gromov \cite{gromov} in 1983 regarding knot distortion.  The distortion of an embedded circle $\gamma$ in $\mathbb{R}^3$ is defined to be:
\[\delta(\gamma)=\sup_{p,q\in\gamma}\frac{d_\gamma(p,q)}{d_{\mathbb{R}^3}(p,q)},\]
where $d_\gamma$ is arclength along $\gamma$, and $d_{\mathbb{R}^3}$ is Euclidean distance in $\mathbb{R}^3$.  Gromov asked whether there exists a uniform upper bound on distortion for all isotopy classes of knots, or at least for  torus knots. Specifically, Gromov asked, does every isotopy class of knots have a representative with distortion less than, say, 100?  To answer this question, Pardon showed that every knot isotopy class $K$ satisfies:
\[\delta(K):=\min_{ \text{representatives }\gamma \text{ of }K}\delta(\gamma)\geq\frac{1}{160} r(K),\]
where $r(K)$ denotes representativity. In particular, since the representativity of any $(p,q)$ torus knot is $r(T_{p,q})= \text{min}\{p,q\}$ (more to the point and easier to check is $r(T_{p,q})\geq\min\{p,q\}$), so that $\delta(T_{p,q})\to\infty$ as $p,q\to\infty$, Pardon was able to answer Gromov's question in the negative. Current work of Blair, Campisi, Taylor and Tomova \cite{bt} provides a lower bound for $\delta(K)$ in terms of distance and bridge numbers.  Our main result implies that their lower bound improves Pardon's lower bound in the case of alternating knots with sufficiently large distance:
\begin{maintheorem}
Every non-trivial, non-split alternating link $L$ has representativity $r(L)=2$.
\end{maintheorem}
As another application, the main theorem gives a new proof of the fact that the only alternating torus links are the 2-braids $T_{2,q}$, again since $r(T_{p,q})=\min\{p,q\}$. As $T_{3,4}$ is almost-alternating, the main theorem does not extend to this class of links. We return to this example in \textsection\ref{S:5},  suggesting a possible approach to the characterization of almost-alternating links with representativity $\geq 3$.

To prove the main theorem, we employ the crossing ball structures introduced by William Menasco \cite{men}. Roughly, the idea is to insert a ball $C_t$ at each crossing of a given diagram $D$ on $S^2$, to perturb $L$ to lie on $(S^2\setminus C)\cup\partial\nu C$, where $C=\bigsqcup C_t$, and then to isotope a given closed surface $F\supset L$ (fixing $L$ and the crossing ball structure $S^2\cup C$) so as to minimize its intersections with $C$ and $S^2$ away from $L$. 
We show that whenever $F$ is essential (incompressible and $\partial$-incompressible in the link exterior $S^3\setminus \text{int}(\nu L)$), there exists an isotopy of $F$ which produces a standard tube near some crossing (cf. Figure \ref{Fi:Smoothing}).

\begin{ctlemma}
Given a non-trivial, connected, reduced alternating diagram of a link $L$ and a closed, essential surface $F\supset L$, 
there exists an isotopy of $F$ after which some crossing has a standard \textbf{tube}.
\end{ctlemma}

This lemma not only provides a compressing disk for $F$ whose boundary intersects $L$ in two points; it also provides a possible inductive move, in the tradition of \cite{gab,ak}, albeit one still awaiting application.

 \textbf{Thank you} to Seungwon Kim for sharing this problem during a visit to Iowa; to Colin Adams for introducing the author to Menasco's techniques (in particular to ``bigon moves'');  to Maggy Tomova and Ryan Blair for helpful discussions around early and final versions of this paper, respectively; and to Charlie Frohman for patient and inspired coaching.


\section{Initial setup}\label{S:2}

\subsection{Link diagrams and crossing balls}\label{S:21}
A {link diagram} $D\subset S^2$ is the image of an immersion of one or more circles in $S^2\subset S^3$ in which all self-intersections are double-points at which the two intersecting arcs are transverse in $S^2$ and are labeled with opposite normal directions relative to $S^2$. Thus, a link diagram $D$ can also be seen as a smoothly embedded 4-valent graph in $S^2$ with over-under information at each vertex.  An embedding of the underlying link $L$ can be obtained by perturbing the two arcs of $D$ near each crossing point in the indicated normal directions.  

A link diagram $D$ is called \textbf{alternating} if, for each edge of $D$ (seen as a 4-valent graph), the crossing points at its two ends are labeled with {opposite} normal directions. A link  is called {alternating} if it has an alternating diagram. A link diagram $D$ is called \textbf{reduced} if it lacks \textit{nugatory} crossings, which means that every crossing point is incident to four distinct components of $S^2\setminus D$. 
A link $L$ is called \textbf{split} if $S^3\setminus L$ is reducible, i.e. if there is an embedded 2-sphere in $S^3\setminus L$ which does not bound a ball in $S^3\setminus L$. 


Let $D$ be a diagram of a link $L$ (in \textsection\ref{S:3}-\ref{S:4}, we will assume further that $D$ is non-trivial, connected, reduced, and alternating) with crossing points $c_t$, $t=1,\hdots, n$. Insert small, mutually disjoint (closed) \textbf{crossing balls} $C_t$, $t=1,\hdots, n$, centered at the respective crossing points $c_t$. Denote $C=\bigsqcup_{t=1}^nC_t$.  Construct an embedding of $L$ in $(S^2\setminus C)\cup\partial C$ by perturbing the two arcs in which $D$ intersects each crossing ball $C_t$ in their indicated normal directions from $S^2\cap C_t$ to $\partial C_t$, while fixing $D\cap S^2\setminus \text{int}(C)$.  

Call each resulting component of $L\cap S^2$ an  \textbf{edge} of $L$---note that $L\cap S^2=L\cap D=D\setminus \text{int}(C)$---and call each component of $L\cap\partial C$ an  \textbf{overpass} or an  \textbf{underpass} of $L$, according to which side of $S^2$ it lies on. Near each crossing, this looks like Figure \ref{Fi:crossingball} (center).

\begin{figure}
\begin{center}
\includegraphics[width=6.5in]{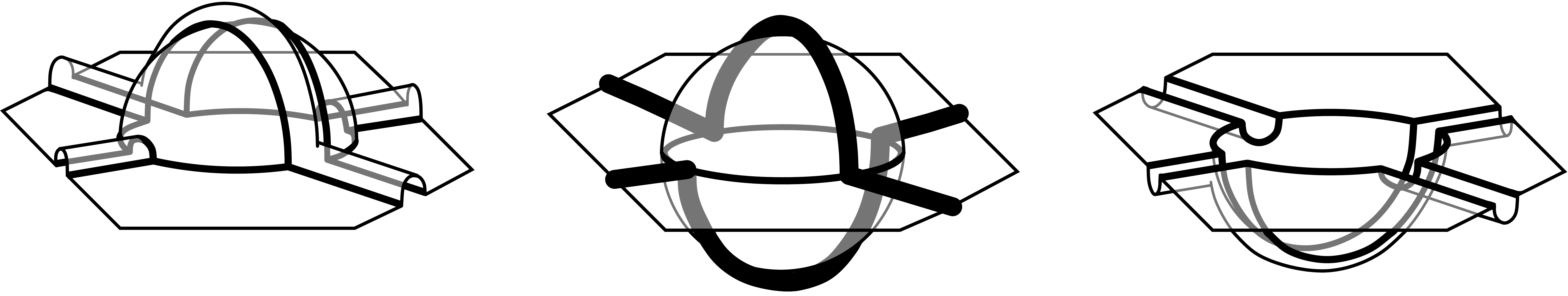}
\caption{A link near a crossing ball (center), with $S^+$ (left) and $S^-$ (right).}
\label{Fi:crossingball}
\end{center}
\end{figure}

\subsection{The regular neighborhood $\nu L$}\label{S:22}
 Let $\nu L\subset S^3$ be a {closed}  regular neighborhood of the link $L$, viewed as (the total space of) a disk-bundle $\pi:\nu L\to L$ for which the restrictions $\pi|_{\partial C}$, $\pi|_{S^2\setminus \text{int}(C)}$ are also bundle maps onto their images (each with fiber a closed interval). 
 Thus, each component of $\nu L$ intersects $(S^2\setminus C)\cup\partial C$ in an annular neighborhood of its core; and, for each point $p\in L\cap S^2\cap\partial C$, the boundary of the disk $\pi^{-1}(p)$ is a meridian on $\partial\nu L$ which consists of an arc of $\partial\nu L\cap\partial C\cap S^\pm$ glued to an arc of $\partial\nu L\cap S^\mp\setminus C$ at the two points of $\pi^{-1}(p)\cap\partial\nu L\cap S^+\cap S^-$.


Let $B^+$ and $B^-$ be the two components into which $S^2\cup C\cup\nu L$ cuts $S^3$; i.e., $B^\pm$ are the 
closures of the components of $S^3\setminus (S^2\cup C\cup\nu L)$. 
Let $S^+=\partial B^+$ and $S^-=\partial B^-$.  Near each crossing, $S^+$ and $S^-$ appear as in Figure \ref{Fi:crossingball} (left and right).
As a quick exercise, check that $S^+\cup S^-=(S^2\setminus (C\cup\nu L))\cup\partial (C\cup\nu L)$ and $S^+\cap S^-=S^2\setminus \text{int}(C\cup\nu L)$.    

Use this setup to extend the terminology of edges, overpasses, and underpasses from $L$ to $\partial\nu L$ as follows.
Call each component of $\pi^{-1}(L\cap\partial C)\cap\partial\nu L\cap S^+$ an  \textbf{overpass} of $\partial\nu L$,
call each component of $\pi^{-1}(L\cap\partial C)\cap\partial\nu L\cap S^-$ an  \textbf{underpass} of $\partial\nu L$, and 
call each component of $\pi^{-1}(L\cap S^2)\cap\partial\nu L$ an  \textbf{edge-pair}; each edge-pair is the union of two \textbf{edges} of $\partial\nu L$, one a component of $\pi^{-1}(L\cap S^2)\cap S^+$, the other a component of $\pi^{-1}(L\cap S^2)\cap S^-$.  
This terminology gives $\partial\nu L$ the following cell decomposition (cf. Figures \ref{Fi:crossingball}, \ref{Fi:EdgeBlank}).  
\begin{itemize}
\item
 The 0-cells are the points of $\partial\nu L\cap S^2\cap\partial C$
 , eight on the boundary of each crossing ball.
\item
   There are several types of (closures of) 1-cells:
\begin{itemize}
\item arcs of $\partial\nu L\cap S^+\cap S^-$,  two running along each edge-pair \textit{of $\partial\nu L$};
\item arcs of $\partial\nu L\cap\partial C\cap S^+$, two along each overpass of $\partial\nu L$;
\item arcs of $\partial\nu L\cap\partial C\cap S^-$,  two along each underpass of $\partial\nu L$; and
\item arcs of $\pi^{-1}(L\cap C\cap S^2)\cap S^\pm$, eight near each crossing ball $C_t$: 
\begin{itemize}
\item four in $\partial\nu L\cap\partial C_t$,
\item two in $S^+\cap\partial\nu L\setminus\partial C$, joining the overpass of $\partial\nu L$ at $C_t$ with edges of $\partial\nu L$, and
\item two in $S^-\cap\partial\nu L\setminus\partial C$, joining the underpass of $\partial\nu L$ at $C_t$ with edges of $\partial\nu L$.
\end{itemize}
\end{itemize}
\item The 
  2-cells' closures are overpasses, underpasses, and edges of $\partial\nu L$; plus components of $\partial\nu L\cap C$.
\end{itemize}
\begin{figure}
\begin{center}
\includegraphics[height=1.4in]{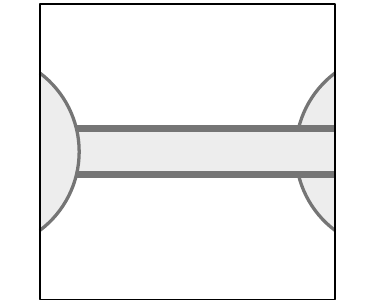}
\hspace{.25in}
\includegraphics[height=1.7in]{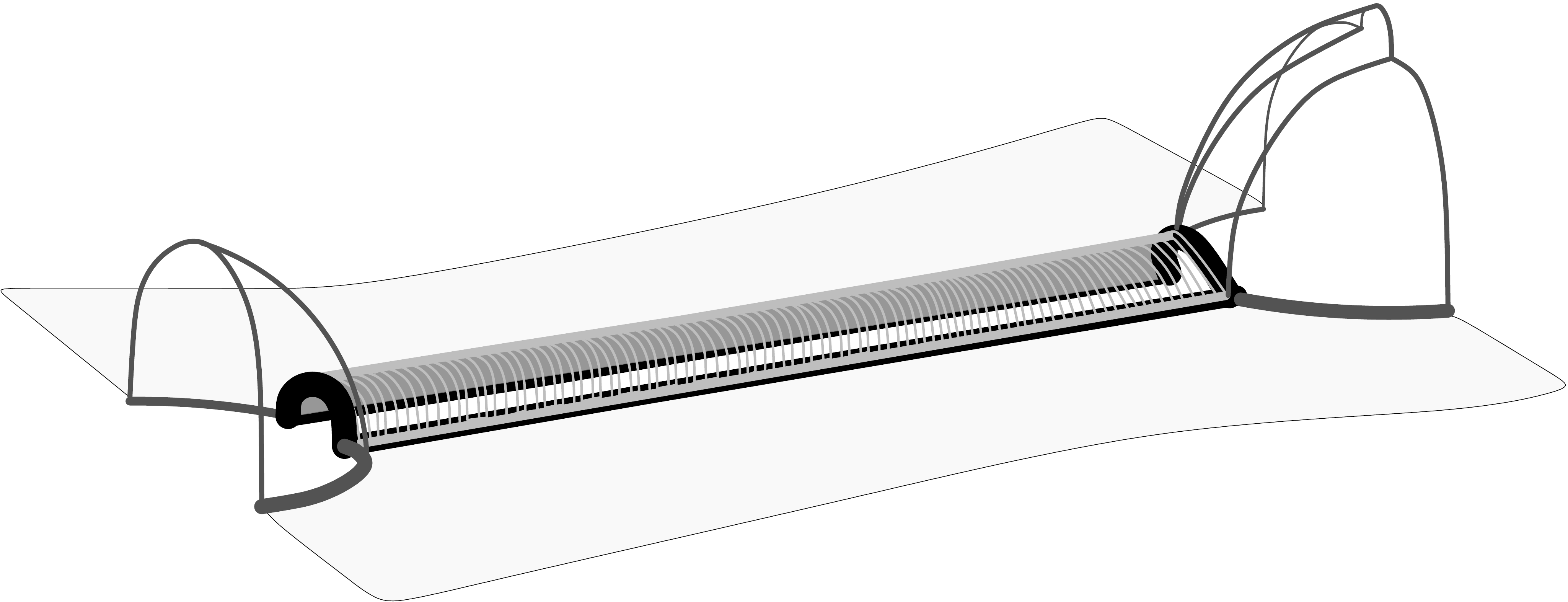}
\caption{Two views of an edge of $\partial\nu L\cap S^+$ from an alternating link diagram. }\label{Fi:EdgeBlank}
\end{center}
\end{figure}
\subsection{Essentiality and minimal complexity for the closed surface $F\supset L$}\label{S:23}
So far, the link $L\subset (S^2\setminus C)\cup\partial C$ follows a 
link diagram $D\subset S^2$; $\nu L$ is a closed regular neighborhood of $L$ in $S^3$, seen as (the total space of) a disk-bundle $\pi:\nu L\to L$; and $B^\pm$ are the components into which $S^2\cup C\cup\nu L$ cuts $S^3$, with $S^\pm=\partial B^\pm$. Now also
 let $F$ be any closed surface in $S^3$ of positive genus that contains $L$. (Recall that a ``closed surface'' is assumed to be compact and connected without boundary). 
Fixing $L\subset F$, $S^2$, and $C$, isotope $F$ so that:
{\begin{itemize}
\item $F$ is transverse to $S^+$ and $S^-$;
\item the restriction $\pi|_F$ is a bundle map (so that $F\cap\nu L$ is a regular neighborhood of $L$ in $F$ and each component of $F\cap\partial\nu L$ is the image of a section of $\pi:\nu L\to L$); and
\item $F\cap C\cap\partial\nu L=\varnothing$ (so that $F$ intersects each overpass and underpass of $\partial\nu L$ in precisely two arcs).
\end{itemize}}
Perform such isotopy so as to \textbf{minimize lexicographically} the numbers of components of $F\cap\partial C\setminus\nu L$
 and of $F\cap S^+\cap S^-$, i.e. to minimize the \textbf{complexity} $\left(|F\cap\partial C\setminus\nu L|,|F\cap S^+\cap S^-|\right)$, where bars count connected components. Note that the first and last conditions above ensure that $F\cap\partial C\setminus\nu L$ will consist only of simple closed curves, since the endpoints of any arc of $F\cap\partial C\setminus\nu L$ would lie on $\partial C\cap\partial\nu L$.

The surface $F$ is said to be \textbf{compressible} in the link exterior $E=S^3\setminus \text{int}(\nu L)$ if there is a disk $X\subset E$ with $ X\cap F=\partial X$ a simple closed curve (a ``circle'') that does not bound a disk in $F$. The surface $F$ is compressible in $E$ if and only if $r(F,L)=0$. In particular, every split link $L$ has representativity $r(L)=0$.

The surface $F$ is said to be \textbf{$\partial$-compressible} in the link exterior $E$ if there is an arc $\alpha$ on $\partial\nu L$ 
which is parallel to $F$ through $E$---say, through a disk $X$, with $\beta$ denoting the arc $\partial X\cap F$---but not through $\partial\nu L$.   In this case, $\alpha$ is also parallel through a bigon $Y\subset\nu L$ to an arc $\alpha'\subset F\cap\nu L$ which intersects $L$ in one point. Gluing $X$ and $Y$ along $\alpha$ produces a compressing disk $Z$ for $F$ in $S^3$ whose boundary $\partial Z=\alpha'\cup\beta$ intersects $L$ in one point.  Thus, if $F$ is $\partial$-compressible in $E$, then $r(F,L)\leq 1$; conversely as well.  
 
 In particular, the trivial {knot} $L$ has representativity $r(L)=1$, since any positive genus, closed surface containing $L$ must be compressible or $\partial$-compressible.  A closed surface $F\supset L$ is called \textbf{essential} if it is neither compressible nor $\partial$-compressible in the link exterior.  

Any non-trivial \textit{knot} $L$ is contained in a positive genus, closed surface $F$ with $r(F,L)=2$, namely the {interpolating surface} for any incompressible Seifert surface for $L$, or more generally for any algebraically essential spanning surface for $L$. (A spanning surface $V$ for $L$ is an embedded surface with boundary $\partial V=L$ in $S^3$; the interpolating surface for a spanning surface $V$ is the boundary of a regular neighborhood of $V$ in the link exterior; and $V$ is called algebraically essential if its interpolating surface is incompressible and $\partial$-incompressible in the link exterior.) Is this also true of non-split \textit{links} $L$? Is $r(L)\geq 2$ \textit{if} and only if $L$ is non-split and non-trivial? This is true for all non-split, non-trivial links with an algebraically essential, \textit{connected} spanning surface, using the interpolating surface.  Does every non-split link have such a span? 

At least in the alternating case, the answer is yes (in fact, \textit{all} spanning surfaces for non-split alternating links are connected \cite{ak}). Thus, an alternating link $L$ in a reduced alternating diagram $D$ obeys: 
\begin{align*}
r(L)=0&\iff~L \text{ is split }\iff~D \text{ is disconnected;}\\
r(L)=1&\iff~L \text{ is the unknot }\iff~D \text{ is the trivial diagram.}\\
\end{align*}
The main theorem states that whenever $L$ is alternating, non-trivial, and non-split, $r(L)=2$.
\subsection{A preliminary consequence of the initial setup}\label{S:24}
The three results in \textsection\ref{S:24} assume the following setup from \textsection\ref{S:21}-\ref{S:23}.
A link $L\subset (S^2\setminus C)\cup\partial C$ follows a non-trivial, connected link diagram $D\subset S^2$ with $n$ crossings, and $\nu L$ is a regular neighborhood of $L$, seen as (the total space) of a disk-bundle $\pi:\nu L\to L$.
Balls $B^\pm$ are the closures of the two components of $S^3\setminus (S^2\cup C\cup\nu L)$, with $S^\pm=\partial B^\pm$ and $S^+\cap S^-=S^2\setminus \text{int}(C\cup\nu L)$.
And a closed essential surface $F\supset L$ has been isotoped, subject to the conditions that $F$ is transverse to $S^+$ and $S^-$, $\pi|_F$ is a bundle map, and $F\cap C\cap\partial\nu L=\varnothing$, so as to minimize its complexity $\left(|F\cap\partial C\setminus\nu L,F\cap S^+\cap S^-|\right)$. This setup implies:
\begin{prop}\label{P:Disks}
All components of $F\setminus (S^+\cup S^-\cup\nu L)$ are disks.
\end{prop}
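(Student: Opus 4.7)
My plan is to leverage the observation that each component of $F\setminus(S^+\cup S^-\cup\nu L)$ is properly embedded in one of the 3-ball regions into which $S^+\cup S^-\cup\nu L$ cuts $S^3$, and then use $F$'s essentiality together with the minimality of its complexity to force each such component to be a disk. First I would check that the three types of closed regions---namely $B^+$, $B^-$, and each $C_t\setminus\text{int}(\nu L)$---are 3-balls. Each $B^\pm$ is built from one side of $S^2$ by first digging out the respective (upper or lower) halves of each crossing ball and then the half-tube neighborhoods $\nu L\cap B^\pm$ along the edges and over/underpasses, every step being a boundary-ball removal that preserves the 3-ball property; and each $C_t\setminus\text{int}(\nu L)$ is a ball with two shallow boundary half-tubes removed. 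A component $P$ of $F\setminus(S^+\cup S^-\cup\nu L)$ is thus a compact surface properly embedded in one of these 3-balls, with $\partial P\subset S^+\cup S^-\cup\partial\nu L$.

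Now suppose toward contradiction that some such $P$ is not a disk. Since $L\subset F\cap\nu L$ is nonempty, $\partial P$ is nonempty, and $P$ failing to be a disk means it has positive genus or at least two boundary components. In either case $P$ contains an essential simple closed curve $\gamma$, that is, one bounding no subdisk of $P$. Since $\gamma$ lies in a 3-ball, it bounds an embedded disk $D$ in that ball, and an innermost-disk cleanup (capping off innermost circles of $D\cap F$ using the incompressibility of $F$ in $E=S^3\setminus\text{int}(\nu L)$) lets me arrange $D\cap F=\gamma$. The disk $D$ lies in $E$, so by incompressibility of $F$, the curve $\gamma$ also bounds a disk $D_F\subset F$.

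Then $D\cup D_F$ is an embedded sphere in $S^3$, which by Alexander's theorem bounds a 3-ball $B$. After further cleanup of $F\cap B$ (extra components inside $B$ would be closed, hence spheres in the 3-ball, and can be surgered away against $F$'s incompressibility), I arrange $F\cap B=D_F$. Isotoping $F$ across $B$ replaces $D_F$ with a slight pushoff of $D$ into the ambient ball region; this isotopy can be adjusted to preserve the transversality and bundle conditions from \textsection 2.3. Because $D$ is disjoint from $S^+\cup S^-\cup\partial\nu L$ while $\gamma$ is essential in $P$, the interior of $D_F$ must contain at least one closed curve of $F\cap(S^+\cup S^-\cup\partial\nu L)$, and the isotopy eliminates all such curves without introducing new ones.

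It remains to show that at least one eliminated curve lies on $\partial C\setminus\nu L$ or on $S^+\cap S^-$---not entirely on $\partial\nu L$---so that the lex complexity $(|F\cap\partial C\setminus\nu L|,|F\cap S^+\cap S^-|)$ strictly decreases. Any curve of $F\cap\partial\nu L\setminus C$ is a longitude of the torus $\partial\nu L$ by the bundle structure on $\pi|_F$, and standard essential-surface arguments rule out such a longitude bounding a disk in $F$: gluing the putative disk to the annular component of $F\cap\nu L$ that contains the longitude on its boundary yields a disk in $F$ whose interior contains a component of $L$, making that component nullhomotopic in $F$; loop-theorem-style surgery on such a disk produces a compressing or $\partial$-compressing disk for $F$ in $E$, contradicting essentiality. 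Hence the interior of $D_F$ contains a curve on $\partial C\setminus\nu L$ or on $S^+\cap S^-$, the isotopy strictly reduces the lex complexity, and minimality is contradicted. The main obstacle I anticipate is the innermost-disk bookkeeping for both $D\cap F$ and $F\cap B$, together with the longitude-non-bounding step, which relies on $F$'s essentiality in the specific context of the link exterior.
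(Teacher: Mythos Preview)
Your approach is essentially the same as the paper's: both arguments identify the complementary regions $B^+$, $B^-$, $C_t\setminus\text{int}(\nu L)$ as 3-balls, use incompressibility of $F$ in the link exterior to produce a disk $D_F\subset F$ bounded by a curve that also bounds in the ball, and then invoke minimal complexity to force $D_F$ into the ball. The paper does this in about five sentences by working directly with the boundary circles of $F\cap S^\pm$ and $F\cap\partial(C_t\setminus\nu L)$ rather than with an auxiliary essential curve $\gamma\subset P$, which lets it skip your innermost-disk cleanup of $D\cap F$.

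Your final paragraph addresses a genuine subtlety that the paper simply glosses over: whether the disk $D_F\subset F$ could contain an entire component $K$ of $L$, which would obstruct the complexity-reducing isotopy (since $L$ is held fixed). Your instinct to worry about this is good, but the resolution you sketch is not quite right. A longitude $\ell\subset F\cap\partial\nu L$ bounding a disk in $F$ does not by itself produce a compressing or $\partial$-compressing disk for $F$ in $E$ via ``loop-theorem-style surgery''; the disk you build lies in $F$, not transverse to it, and pushing it off yields a compressing disk for $\partial\nu K$, not for $F$. A cleaner route is to note that if some innermost component $K\subset D_F$ bounds a subdisk of $D_F$ disjoint from $L\setminus K$, then $K$ is split from the rest of $L$ by the frontier of a regular neighborhood of that subdisk; since $F$ is connected and incompressible with $L\subset F$, this forces $L$ to be non-split, giving the contradiction. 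In any case, the paper is content to leave this point implicit, so your proof is already at least as careful as the original.
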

\begin{proof}
The $n+2$ components of $S^3\setminus (S^+\cup S^-\cup\nu L)$---namely $B^+$, $B^-$, and $C_t\setminus \text{int}(\nu L)$, $t=1,\hdots, n$---are all topological 3-balls, and their boundaries are the 2-spheres $S^+$, $S^-$, and $\partial (C_t\setminus\nu L)$. These spheres intersect $F$ transversally, hence in circles.  Each such circle bounds a disk in the corresponding ball. 

The incompressibility of $F$ implies that each of these circles also bounds a disk in $F$. Minimality then implies that this disk in $F$ must lie entirely in the appropriate ball, as claimed. Specifically, each circle of $F\cap S^\pm$ bounds a disk of $F\cap B^\pm$, and each circle of $F\cap (\partial C_t\setminus \text{int}(\nu L))$ bounds a disk of $F\cap C_t\setminus \text{int}(\nu L)$.
\end{proof}
%
%

%

\section{Technical conveniences}\label{S:broader}
Throughout \textsection\ref{S:broader}, maintain all setup from \textsection\ref{S:21}-\ref{S:23}, but replace the assumption that the complexity of $F$ is minimized with the assumption that all components of $F\setminus (S^+\cup S^-\cup\nu L)$ are disks.  Proposition \ref{P:Disks} implies that this setting is more general than the initial setup.  
 
 \subsection{Preliminary consequences in the broader setting -- arcs and balls.}
 \begin{prop}\label{P:Arcs}
All components of $F\cap\partial\nu L\cap S^\pm$, $F\cap\partial C\cap S^\pm$, and $F\cap S^+\cap S^-$ are arcs.
\end{prop}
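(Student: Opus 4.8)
\subsection*{Proof proposal}

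The plan is first to observe that each of the three loci is a compact $1$-manifold with boundary---hence a disjoint union of arcs and circles---and then to exclude circle components, splitting into two cases according to whether or not the circle is permitted to bound disks of $F$ inside the complementary balls $B^+$, $B^-$, $C_t\setminus\text{int}(\nu L)$.

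That each locus is a $1$-manifold with (possibly empty) boundary is routine from the standing hypotheses. Each of $S^2\setminus\text{int}(C\cup\nu L)$, $\partial C_t\cap S^\pm$, and $\partial\nu L\cap S^\pm$ is a compact surface with boundary lying on one of the spheres $S^2$, $\partial C_t$, $\partial\nu L$; near an interior point of such a piece $F$ meets the ambient sphere transversally, so the locus is there a $1$-manifold, while near a boundary point $F$ meets the bounding curve transversally, contributing a boundary point of the locus. (Alternatively, for the $\partial\nu L$ pieces: since $\pi|_F$ is a bundle map, $F\cap\nu L$ is a regular neighbourhood of $L$ in $F$, so $F\cap\partial\nu L$ is a disjoint union of circles, and one intersects with $S^\pm$.) In particular no isolated points occur, so it remains to rule out circle components.

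Suppose $c$ is a circle component of $F\cap S^+\cap S^-$, of $F\cap\partial C\cap S^+$, or of $F\cap\partial C\cap S^-$ (the last two symmetric). Then $c$ is a whole circle of the transverse intersection $F\cap S^+$ (or $F\cap S^-$), since that intersection is a disjoint union of circles and $c$ is a connected subset of it; and if $c\subset\partial C_t\cap S^+$, then $F\cap C\cap\partial\nu L=\varnothing$ shows $c$ avoids $\partial\nu L$, so $c$ is also a whole circle of $F\cap(\partial C_t\setminus\text{int}(\nu L))$. In every case Proposition~\ref{P:Disks} lets $c$ bound a disk of $F$ inside each of two of the $n+2$ balls $B^+$, $B^-$, $C_t\setminus\text{int}(\nu L)$---namely $B^+$ and $B^-$ if $c$ lies on $S^2\setminus\text{int}(C\cup\nu L)$, and $B^+$ and $C_t\setminus\text{int}(\nu L)$ if $c\subset\partial C_t\cap S^+$. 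These two disks meet only along $c$ and together form a $2$-sphere; being a closed subsurface of the connected closed surface $F$, it is all of $F$, contradicting that $F$ has positive genus. Hence $F\cap S^+\cap S^-$ and $F\cap\partial C\cap S^\pm$ consist of arcs.

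Finally, a circle component of $F\cap\partial\nu L\cap S^+$ would be an entire component of $F\cap\partial\nu L$ contained in $S^+$; since $F\cap\nu L$ is a regular neighbourhood of $L$ in $F$, such a component is a push-off of some link component $K$ along the torus $\partial\nu K$, hence a curve isotopic to $K$ in $\partial\nu K$ and contained in $\partial\nu K\cap S^+$. But the portion of $\partial\nu K$ lying over any underpass arc of $K$ is contained in $S^-$, and in a reduced, connected, alternating diagram the over/under labels alternate along $K$, so $K$ has an underpass; hence every push-off of $K$ meets $S^-$, and symmetrically meets $S^+$. Thus no component of $F\cap\partial\nu L$ lies entirely in $S^+$ or entirely in $S^-$, and $F\cap\partial\nu L\cap S^\pm$ consists of arcs as well. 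The only real obstacle I anticipate is the bookkeeping for these local-structure assertions---in particular, pinning down exactly which complementary ball sits on each side of a circle of $F\cap\partial C\cap S^\pm$---the topological heart of the matter, that a $2$-sphere assembled from two disks inside a connected positive-genus surface would exhaust that surface, being immediate.
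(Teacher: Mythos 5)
Your handling of $F\cap S^+\cap S^-$ and $F\cap\partial C\cap S^\pm$ is essentially the paper's argument: a circle component would be a full circle of $F\cap S^\pm$ (and, in the crossing-ball case, of $F\cap\partial(C_t\setminus\mathrm{int}(\nu L))$, using $F\cap C\cap\partial\nu L=\varnothing$), hence would bound disks of $F$ on both sides by the standing disk condition (Proposition \ref{P:Disks} in the minimal setting), and the resulting sphere would exhaust the connected surface $F$ -- the paper phrases the contradiction via connectedness of $F\supset L$, you via positive genus; either is fine.

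Where you genuinely diverge is the $F\cap\partial\nu L\cap S^\pm$ part, and this is also where there is a caveat. You rule out a section of $\pi$ lying entirely in $S^+$ by invoking alternation of over/under labels along each component $K$ in a ``reduced, connected, alternating diagram,'' so that $K$ has an underpass. But this proposition lives in \textsection\ref{S:broader}, where alternatingness is \emph{not} assumed: the paper's argument is that if some component of $\nu L$ lacked an overpass or an underpass then $L$ would be split and the connected surface $F$ would be compressible, contradicting essentiality. That alternating-free argument is what the paper needs later, since the results of \textsection\ref{S:31} are advertised as holding ``regardless of alternatingness'' and are applied to an almost-alternating configuration in \textsection\ref{S:5}; your version proves the proposition only for connected alternating diagrams (where, to be fair, it is more elementary, needing no appeal to incompressibility). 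Two smaller points in the same paragraph: the portion of $\partial\nu K$ over an underpass lies in $S^-\cup C$, not in $S^-$ alone, so you should invoke the standing condition $F\cap C\cap\partial\nu L=\varnothing$ to place the section there in $S^-$; and since $S^+\cap S^-\neq\varnothing$, ``meets $S^-$'' does not by itself preclude ``contained in $S^+$'' -- you need that over an underpass the section lies in $S^-\setminus S^+$ (those points are off $S^2\supset S^+\cap S^-$), which is true but should be said.
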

\begin{proof}
Every component of $\nu L$ contains an overpass and an underpass, or else $L$ would be a split link, and $F$ (assumed to be connected) would be compressible, contrary to assumption.  Therefore, each component of $F\cap\partial\nu L$, (the image of) a section of $\pi:\nu L\to L$ by assumption, must intersect $S^+\cap S^-$.  Hence, no component of $F\cap\partial\nu L\cap S^\pm$ is a circle; instead, each 
 must be an arc.
 
All components of $\partial C\cap S^+$, $\partial C\cap S^-$, and $S^+\cap S^-$ are disks. If $F$ intersected one of these disks in a circle, $\gamma$, then all components of $F\setminus (S^+\cup S^-\cup\nu L)$ are disks, $\gamma$ would bound disks of $F$ in both components of  $F\setminus (S^+\cup S^-\cup\nu L)$ whose boundaries contain $\gamma$.  This contradicts the assumption that $F$ is connected.
\end{proof}
The conclusion of Proposition \ref{P:Arcs}, implies that the number of components of  $F\cap S^+\cap S^-$ equals half the number of points of $F\cap\partial (S^+\cap S^-)
$, which will be more convenient to count.  Also, the conclusions of Propositions \ref{P:Disks} and \ref{P:Arcs} together imply that $F\setminus \text{int}(\nu L)$ has the following cell decomposition: 
\begin{itemize}
\item The 0-cells are the points of $F\cap\partial (S^+\cap S^-)$.
\item There are three types of (closures of) 1-cells: arcs of $F\cap S^+\cap S^-$,  $F\cap\partial C\cap S^\pm$, and  $F\cap\partial\nu L\cap S^\pm$.
\item The (closures of the) 2-cells are the components of $F\cap B^+$, $F\cap B^-$, and $F\cap C\setminus\nu L$.
\end{itemize}
\begin{prop}\label{P:Balls}
 All components of $B^\pm\setminus F$ are 3-balls.
\end{prop}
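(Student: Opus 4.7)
The plan is a straightforward induction on the number $k$ of components of $F\cap B^\pm$, exploiting the fact (recorded in the proof of Proposition \ref{P:Disks}) that $B^\pm$ is itself a topological 3-ball whose boundary is the 2-sphere $S^\pm$.

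First I would note that every component of $F\cap B^\pm$ is a properly embedded disk in $B^\pm$. Since $\mathrm{int}(B^\pm)$ is disjoint from $S^+\cup S^-\cup\nu L$, each component of $F\cap\mathrm{int}(B^\pm)$ is a component of $F\setminus(S^+\cup S^-\cup\nu L)$, hence a disk by the standing hypothesis of \textsection\ref{S:broader}; its closure in $B^\pm$ is then a properly embedded disk with boundary on $\partial B^\pm=S^\pm$. (Note that the boundary circle is allowed to run across the portions of $\partial\nu L$ that lie inside $S^\pm$, but this does not affect proper embeddedness.)

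For the induction, the base case $k=0$ is immediate: $B^\pm\setminus F=B^\pm$, a 3-ball. For the inductive step I would pick any component $\Delta$ of $F\cap B^\pm$; since $\Delta$ is a properly embedded disk in the 3-ball $B^\pm$, the Sch\"onflies theorem applied to $\partial\Delta\subset S^\pm$ shows that $\partial\Delta$ bounds a disk on each side in $S^\pm$, and it follows that cutting $B^\pm$ along $\Delta$ yields two 3-balls. The remaining $k-1$ components of $F\cap B^\pm$, being connected and disjoint from $\Delta$, distribute as disjoint properly embedded disks between the two new 3-balls, and the inductive hypothesis applied to each completes the argument; the resulting collection of 3-balls is exactly the set of components of $B^\pm\setminus F$.

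I do not anticipate a real obstacle: the topology of $B^\pm$ as a 3-ball has already been set up in the proof of Proposition \ref{P:Disks}, and the disk structure of $F\cap B^\pm$ is handed to us by the standing hypothesis. What is left is the classical observation that cutting a 3-ball along disjoint, properly embedded disks produces only 3-balls, for which the only care needed is the bookkeeping ensuring the induction hypothesis continues to apply after each cut.
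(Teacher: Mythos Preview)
Your proof is correct, but it is organized differently from the paper's. The paper does not induct on the disks of $F\cap B^\pm$; instead it fixes a component $A$ of $B^\pm\setminus F$ and analyzes $\partial A$ directly: using the connectedness of $F$, each boundary component $X$ of $A$ contains some region $Y$ of $S^\pm\setminus F$, and capping the boundary circles of $Y$ with the corresponding disks of $F\cap B^\pm$ exhibits $X$ as a $2$-sphere (via an Euler characteristic count), which then bounds a $3$-ball in $B^\pm$; a short contradiction argument shows this ball is $A$ itself. Your induction is arguably more elementary---it reduces everything to the single fact that a properly embedded disk separates a $3$-ball into two $3$-balls---and it does not need the connectedness of $F$. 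The paper's argument, on the other hand, avoids induction and gives an explicit description of the boundary sphere of each complementary region, which is a mild conceptual bonus but not used elsewhere.
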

\begin{proof}
Let $A$ be a component of $B^\pm\setminus F$, and let $X$ be a component of $\partial A$.  Since $F$ is connected, $X$ must contain (at least) one component $Y$ of $S^\pm\setminus F$. 
The boundary of $Y$ is a union of circles of $F\cap S^\pm$, each of which bounds a disk of $F\cap B^\pm$, by assumption.  The union of these disks and $Y$ is a closed surface and therefore equals $X$; computing euler characteristic reveals that $X$ is a 2-sphere in $B^\pm$ and therefore bounds a 3-ball in $B^\pm$.  This 3-ball must be $A$; otherwise, since each component of $\partial A$ is a 2-sphere which bounds a 3-ball in $B^\pm$, gluing each of these 3-balls to $A$ along $\partial A$ would yield a closed 3-manifold (compact and connected without boundary) contained in $B^\pm$.
\end{proof}

\subsection{Bigon moves}\label{S:bigon}
\begin{figure}
\begin{center}
\includegraphics[width=6.5in]{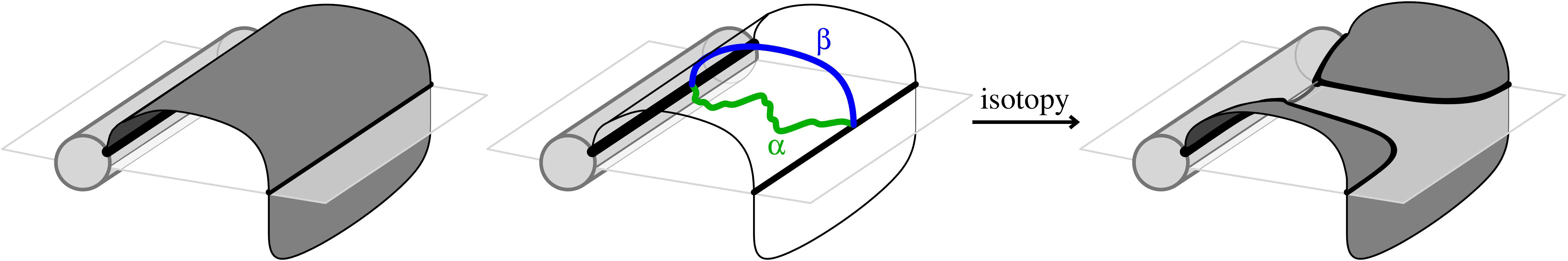}
\caption{Bigon moves will often prove useful, even when they increase the complexity of $F$.}
\label{Fi:bigon}
\end{center}
\end{figure}
\begin{figure}
\begin{center}
\includegraphics[width=6.5in]{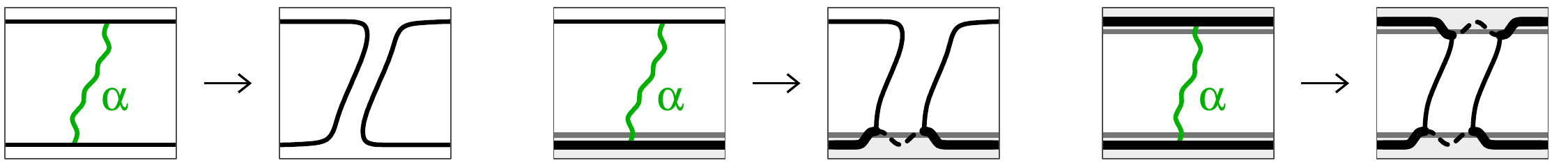}
\caption{A bigon move pushes an arc $\beta\subset F\cap B^\pm$ past a parallel arc $\alpha\subset S^\pm\setminus\pi^{-1}(C)$, provided $\alpha$ is not parallel to $F$ through $S^\pm\setminus C$ and 
 $|\alpha\cap S^+\cap S^-|=1$.}
\label{Fi:move}
\end{center}
\end{figure}
Several proofs in \textsection\ref{S:3}-\ref{S:4} will use an isotopy move which pushes an arc $\beta\subset F\cap B^\pm$ past a parallel arc $\alpha\subset S^\pm\setminus F$ through a disk $Z$ with $Z\cap F=\beta\subset\partial Z=\alpha\cup\beta$. One type of this ``bigon move'' is illustrated in Figure \ref{Fi:bigon}; all three types are diagrammed in Figure \ref{Fi:move}. More precisely, a bigon move follows an arc $\alpha\subset S^\pm$ that:
\begin{itemize}
\item intersects $F$ precisely on its endpoints, which lie on the same circle $\gamma$ of $F\cap S^\pm$;
\item is not parallel in $S^\pm\setminus C$ to $F$; 
\item is disjoint from $\pi^{-1}(C)$, i.e. from crossing balls and over/underpasses; and
\item intersects $S^+\cap S^-$ in exactly one component. 
\end{itemize}
Think of $\alpha$, which initially is not ``part of the diagram,''   as a marker which joins two points that lie on the same circle $\gamma$ of $F\cap S^\pm$, but ``not obviously'' so, locally.  
Because the circle 
$\gamma$ bounds a disk $Y\subset F\cap B^\pm$,  there is an arc $\beta\subset Y$ with the same endpoints as $\alpha$. The circle $\alpha\cup\beta$ lies on the boundary of some component of $B^\pm\setminus F$, a ball, 
and therefore bounds a disk $Z\subset B^\pm$ whose interior is disjoint from $F$.  The disk $Z$ is a bigon in the sense that $\partial Z=\alpha\cup\beta$.
\begin{bigonmove}\label{BigonMove}
Given an arc $\alpha\subset S^\pm$ that satisfies the four conditions above, it is possible to isotope $F$ near a parallel arc $\beta\subset F\cap B^\pm$ through a bigon $Z$ past $\alpha$.
\end{bigonmove}
All bigon moves take place away from the crossing balls, and thus preserve both the number of components of  $F\cap\partial C\setminus\nu L$ and the fact that each of these components bounds a disk of $F\cap C\setminus \nu L$. 
Not all bigon moves, however, preserve the fact that all components of $F\setminus (S^+\cup S^-\cup \nu L)$ are disks. At least:
\begin{lemma}\label{L:Move}
Performing a bigon move preserves the fact that all components of $F\setminus (S^+\cup S^-\cup \nu L)$ are disks whenever $\alpha\cap\partial\nu L\neq\varnothing$ and whenever the complexity of $F$ is minimized.
\end{lemma}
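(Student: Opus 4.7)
The plan is to track exactly which 2-cells of $F\setminus(S^+\cup S^-\cup\nu L)$ are modified by the bigon move, and verify in each configuration that the new 2-cells are still disks, drawing on one of the two hypotheses as appropriate. Since the isotopy is supported in a neighborhood of the bigon $Z$, only 2-cells near $Z$ and $\alpha$ are affected. Let $N$ be a thin rectangular neighborhood of $\beta$ in the disk $Y\subset F\cap B^\pm$ given by Proposition \ref{P:Disks}, so the move replaces $N$ with a translated rectangular strip $N'$ near $\alpha$. The arc $\beta$ cuts $Y$ into sub-disks $Y_1,Y_2$, and removing $N$ leaves sub-disks $Y_1',Y_2'$ in $F'\cap B^\pm$. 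The candidate new 2-cells of $F'\setminus(S^+\cup S^-\cup\nu L)$ are obtained by regluing $Y_1',Y_2'$ and the pieces of $N'$ to the pre-existing 2-cells on the opposite side of $\alpha$, along the new intersections of $N'$ with $S^+\cup S^-\cup\nu L$.

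Next I would enumerate these new intersections by tracing $\alpha$ itself. By hypothesis $\alpha$ crosses $S^+\cap S^-$ in exactly one subarc, avoids $\pi^{-1}(C)$, and meets $\partial\nu L$ in zero, one, or two subarcs. Each component of $\alpha\cap(S^+\cup S^-\cup\nu L)$ lifts to a transverse arc of $N'\cap(S^+\cup S^-\cup\nu L)$, cutting $N'$ into rectangular sub-cells sitting in $B^+$, $B^-$, or $\nu L$. The only way a non-disk component of $F'\setminus(S^+\cup S^-\cup\nu L)$ can appear is for such a rectangular sub-cell to be glued to a pre-existing disk 2-cell along \emph{two} boundary arcs simultaneously, producing an annulus instead of a larger disk.

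Under the hypothesis $\alpha\cap\partial\nu L\neq\varnothing$, the extra arcs of $F'\cap\partial\nu L\cap S^\pm$ introduced by $\alpha$'s $\partial\nu L$-crossings subdivide the attachments into single-arc gluings, so each new 2-cell is obtained by attaching a rectangle to a disk along exactly one boundary arc, producing a disk. I would verify this directly against the three configurations in Figure \ref{Fi:move}: in each configuration involving $\partial\nu L$, the crossing arc separates what would otherwise be the two ends of an annular attachment into attachments to distinct 2-cells (or along distinct arcs of a single 2-cell) on the opposite side of $\alpha$.

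Under the complementary hypothesis that $\alpha\cap\partial\nu L=\varnothing$ (so $\alpha\subset S^+\cap S^-$) and the complexity of $F$ is minimized, I would argue by contradiction. If a non-disk component $U\subset F'\setminus(S^+\cup S^-\cup\nu L)$ were produced, then $U$ is an annulus in $B^\mp$ whose two boundary circles lie on $F'\cap S^\mp$; by the argument of Proposition \ref{P:Balls} applied to $F'$, the adjacent component of $B^\mp\setminus F'$ is a 3-ball, so one of these boundary circles bounds a compressing disk in $B^\mp\setminus F'$. Pushing $F'$ across this disk yields a further isotopy whose net effect on $F$ strictly reduces $|F\cap S^+\cap S^-|$ while preserving $|F\cap\partial C\setminus\nu L|$, contradicting minimality. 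The main obstacle is the last step: making precise the combinatorics of the reverse compression so that the total effect on $F$ is indeed a strict reduction in complexity rather than a lateral shuffle, which requires carefully tracking how the endpoints of the annulus relate to the circles of $F\cap S^\mp$ near $\alpha$.
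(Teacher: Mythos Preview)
Your treatment of the first sufficient condition, $\alpha\cap\partial\nu L\neq\varnothing$, is essentially correct and matches the paper's argument, which phrases the same observation topologically: a non-disk can arise only if both endpoints of $\alpha$ lie on the same circle of $F\cap S^\mp$ (as well as of $F\cap S^\pm$), forcing $\partial\alpha\subset S^+\cap S^-$ and hence, since $\alpha\cap S^+\cap S^-$ is connected, $\alpha\subset S^+\cap S^-$.

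Your treatment of the minimality case has a genuine gap, which you yourself flag. You invoke Proposition~\ref{P:Balls} for $F'$, but that result is proved under the standing hypothesis of \S\ref{S:broader} that all components of the surface cut along $S^+\cup S^-\cup\nu L$ are disks---precisely what fails for $F'$---so its conclusion is unavailable. Even granting a compressing disk for the annulus in $B^\mp\setminus F'$, you do not establish that the composite isotopy strictly lowers $|F\cap S^+\cap S^-|$. The paper's argument avoids $F'$ altogether and is structurally different: since $\alpha\subset S^+\cap S^-$ with endpoints on the same circle of $F\cap S^\mp$, there is an arc $\beta'\subset F\cap B^\mp$ with $\partial\beta'=\partial\alpha$, and hence bigons $Z\subset B^\pm$, $Z'\subset B^\mp$ with $\partial(Z\cup Z')=\beta\cup\beta'\subset F\setminus\nu L$. \emph{Incompressibility of $F$ in the link exterior}---an ingredient your argument never uses---then yields a disk $X\subset F\setminus\mathrm{int}(\nu L)$ with $\partial X=\beta\cup\beta'$, and non-splitness gives a ball $W\subset S^3\setminus\nu L$ with $\partial W=X\cup Z\cup Z'$. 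Minimality now forces $X\cap C=\varnothing$ and $X\cap S^+\cap S^-$ to be a single arc $\delta$, so $W$ misses $C\cup\nu L$ and $W\cap S^+\cap S^-$ is a single disk exhibiting $\alpha$ as parallel in $S^+\cap S^-\setminus F$ to $\delta\subset F$. That contradicts one of the defining conditions of a bigon move (that $\alpha$ is not parallel in $S^\pm\setminus C$ to $F$), rather than contradicting minimality via a further isotopy.
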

\begin{proof}
As noted, any bigon moves preserves the fact that all components of $F\cap C\setminus\nu L$ are disks.  
In order for a bigon move to upset the fact that all components of $F\cap B^+$ and $F\cap B^-$ are disks, a necessary condition is that the endpoints of $\alpha$ lie on the same circle of $F\cap S^\pm$ and also on the same circle of $F\cap S^\mp$. For this to be the case, both endpoints of $\alpha$ must lie in $S^+\cap S^-$, as must all of $\alpha$, since $\alpha\cap S^+\cap S^-$ is connected by assumption. Thus, the condition $\alpha\cap\partial\nu L\neq\varnothing$ in the statement of the Lemma suffices as claimed.

Suppose instead that, with the complexity of $F$ minimized, a bigon move follows an arc $\alpha\subset S^+\cap S^-$ whose endpoints lie both on the same circle $\gamma=\partial Y$ of $F\cap S^\pm$ \textit{and} on the same circle $\gamma'=\partial Y'$ of $F\cap S^\mp$. Letting $Y$, $Y'$ denote the disks of $F\cap B^+$, $F\cap B^-$ respectively bounded by $\gamma$, $\gamma'$, there are arcs $\beta\subset Y$, $\beta'\subset Y'$ with the same endpoints as $\alpha$. 
Further, $\alpha\cup\beta$, $\alpha\cup\beta'$ respectively bound disks $Z\subset B^\pm$, $Z'\subset B^\mp$ whose interiors are disjoint from $F$. 
Gluing $Z$ and $Z'$ along $\alpha$ produces a disk $Z\cup Z'$ with boundary $\beta\cup\beta'\subset F\setminus\nu L$ whose interior is disjoint from $F$; since $F$ is incompressible in the link exterior, $\beta\cup\beta'$ must bound a disk $X\subset F\setminus\text{int}(\nu L)$. 
Since $L$ is non-split, the 2-sphere $Z\cup Z'\cup X\subset S^3\setminus\nu L$ bounds a 3-ball $W\subset S^3\setminus\nu L$
, through which $X$ is parallel to $Z\cup Z'$. 

Since the complexity of $F$ is minimized, the disk $X$, like $Z\cup Z'$, must be disjoint from $C$ and must intersect $S^+\cap S^-$ in a single arc, $\delta$. From $X\cap C=\varnothing$, it follows that $\partial W\cap C=\varnothing$. This and the fact that $W\cap\nu L=\varnothing$ imply that $W$ is disjoint from $C\cup \nu L$, and in particular from $\partial(S^+\cap S^-)$. Therefore, contrary to assumption, $W$ intersects $S^+\cap S^-$ in a single disk through which $\alpha$ is parallel to the arc $\delta\subset F$:
\[
\pushQED{\qed}
\partial(W\cap S^+\cap S^-)=\left(\partial W\cap (S^+\cap S^-)\right)\cup\left(W\cap\partial(S^+\cap S^-)\right)=\alpha\cup \delta.
\qedhere\]
\end{proof}
%
%
Note that a bigon move fixes the number of components of $F\cap S^+\cap S^-$---which equals half the number of points of $F\cap\partial(S^+\cap S^-)$---if and only if  $\alpha\cap\partial\nu L=\varnothing$; otherwise, a bigon move increases this number, and thus the complexity of $F$. 
In particular:
\begin{lemma}\label{L:MoveCase}
If sequence of bigon moves begins with the complexity of $F$ minimized, and if all bigon moves in this sequence along arcs disjoint from $\partial\nu L$ precede all other bigon moves in this sequence, then this sequence of bigon moves preserves the fact that all components of $F\setminus(S^+\cup S^-\cup\nu L)$ are disks. 
\end{lemma}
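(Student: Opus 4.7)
The plan is to partition the given sequence at the boundary between its two types of moves and apply Lemma \ref{L:Move} in each half, using a different sufficient condition in each.

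First I would handle the prefix of moves along arcs $\alpha$ with $\alpha\cap\partial\nu L=\varnothing$. By the remark immediately preceding the statement, each such move fixes $|F\cap S^+\cap S^-|$, and because every bigon move takes place away from the crossing balls, it also fixes $|F\cap\partial C\setminus\nu L|$; hence the complexity is unchanged. Since the sequence starts with complexity minimized and any isotopy that preserves complexity cannot render the surface non-minimal (otherwise the pre-move surface was already non-minimal), the complexity of $F$ remains minimized throughout the prefix. The second hypothesis of Lemma \ref{L:Move} therefore applies at each step of the prefix, and the disk property is maintained. For the suffix, where every move is along an arc $\alpha$ with $\alpha\cap\partial\nu L\neq\varnothing$, the first hypothesis of Lemma \ref{L:Move} applies at every step regardless of whether complexity is still minimal, so the disk property is again preserved. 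An induction on position in the sequence then completes the argument.

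The main obstacle---and the motivation for the ordering assumption---is that suffix moves can strictly increase $|F\cap S^+\cap S^-|$ and thereby destroy minimality of complexity; once minimality is lost, Lemma \ref{L:Move} can no longer be invoked through its second hypothesis, and a subsequent move along an arc disjoint from $\partial\nu L$ would be unjustified. The ordering is designed precisely so that at every step of the sequence at least one hypothesis of Lemma \ref{L:Move} is available: either $\alpha$ meets $\partial\nu L$ (first hypothesis), or we are still in the prefix and the complexity of $F$ is still minimal (second hypothesis).
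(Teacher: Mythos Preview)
Your proposal is correct and follows essentially the same approach as the paper's own proof: split the sequence into the prefix of moves along arcs disjoint from $\partial\nu L$ (where complexity is preserved, so the second sufficient condition of Lemma \ref{L:Move} applies) and the suffix of moves along arcs meeting $\partial\nu L$ (where the first sufficient condition applies regardless of minimality). Your additional explanatory paragraph about why the ordering hypothesis is needed is accurate and matches the underlying motivation, though the paper's proof omits it.
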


\begin{proof}
Because bigon moves along arcs disjoint from $\partial\nu L$ fix the complexity of $F$, all such moves in this sequence are performed while the complexity of $F$ is still minimized. Satisfying the second sufficient condition from Lemma \ref{L:Move}, these bigon moves preserve the fact that all components of $F\setminus(S^+\cup S^-\cup\nu L)$ are disks.  All remaining bigon moves follow arcs that intersect $\partial\nu L$,  meeting the first sufficient condition from Lemma \ref{L:Move}. Therefore, these moves too preserve the fact that all components of $F\setminus(S^+\cup S^-\cup\nu L)$ are disks.
\end{proof}


\subsection{A key lemma}\label{S:lemma}
 
Several proofs in \textsection\ref{S:3}-\ref{S:4} will use the following lemma. 


%
\begin{lemma}\label{L:inessential}
If $\alpha\subset\partial\nu L\cap S^\pm$ is an arc whose endpoints $\partial\alpha=\alpha\cap F$ lie on distinct circles of $F\cap\partial\nu L$, then these endpoints also lie on distinct circles of $F\cap S^\pm$.  
\end{lemma}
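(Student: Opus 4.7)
The plan is to argue by contradiction: suppose the endpoints $p_1,p_2$ of $\alpha$ lie on a common circle $\gamma$ of $F\cap S^\pm$, and manufacture from this data a $\partial$-compression disk for $F$ in the link exterior $E$, contradicting the essentiality of $F$ assumed throughout Section \ref{S:broader}.

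First, using the standing hypothesis that every component of $F\setminus(S^+\cup S^-\cup\nu L)$ is a disk, I identify $\gamma=\partial Y$ for some disk $Y\subset F\cap B^\pm$. As a properly embedded disk in the ball $B^\pm$, $Y$ separates, so its two sides lie in distinct components of $B^\pm\setminus F$. Now $\alpha\setminus\partial\alpha$ is connected and disjoint from $F$, so it lies in a single component of $S^\pm\setminus F$; since crossing $\gamma$ to switch sides is forbidden, $\alpha$ approaches both $p_1$ and $p_2$ from the same side of $\gamma$ in $S^\pm$. That side is adjacent to exactly one of the two components of $B^\pm\setminus F$ bordering $Y$; call it $W$. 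Proposition \ref{P:Balls} says $W$ is a $3$-ball, so $\partial W$ is a $2$-sphere containing both $\alpha$ and the $W$-face of $Y$.

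Second, pick an arc $\beta\subset Y\cap\partial W$ joining $p_1$ to $p_2$. Then $\alpha\cup\beta$ is a simple closed curve on the sphere $\partial W$, and so bounds a disk $Z\subset\partial W$. Push the interior of $Z$ slightly into $\mathrm{int}(W)$ to produce an embedded disk $Z'\subset W\subset B^\pm\subset E$ with $\partial Z'=\alpha\cup\beta$, $Z'\cap F=\beta$, and $Z'\cap\partial\nu L=\alpha$ — exactly the data of a candidate $\partial$-compression disk.

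Third and finally, verify that $\alpha$ is not parallel to $F$ through $\partial\nu L$. Such a parallelism would provide an arc $\alpha'\subset F\cap\partial\nu L$ with the same endpoints as $\alpha$; but $F\cap\partial\nu L$ is a disjoint union of circles (sections of $\pi\colon\nu L\to L$), so any arc within it lies on a single such circle, whereas $p_1$ and $p_2$ lie on distinct circles of $F\cap\partial\nu L$ by hypothesis. Hence $Z'$ is a genuine $\partial$-compression disk, $F$ is $\partial$-compressible in $E$, and essentiality is contradicted. The one place requiring care is the bookkeeping in the first step — verifying that a \emph{single} component $W$ of $B^\pm\setminus F$ simultaneously contains $\alpha$ and an arc of $Y$ from $p_1$ to $p_2$ on its boundary sphere — which reduces to the facts that $Y$ separates $B^\pm$ and that $\alpha$ cannot switch sides of $\gamma$.
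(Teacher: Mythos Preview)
Your proof is correct and follows essentially the same approach as the paper: argue the contrapositive, use the disk $Y\subset F\cap B^\pm$ bounded by $\gamma$ together with Proposition~\ref{P:Balls} to find a disk $Z$ with $\partial Z=\alpha\cup\beta$ and interior disjoint from $F$, then invoke $\partial$-incompressibility. Your version is a bit more explicit about the bookkeeping (identifying the specific component $W$ and pushing $Z$ off $\partial W$), but the underlying argument is the same.
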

%
%
\begin{proof}
Let $\alpha\subset\partial\nu L\cap S^\pm$ be an arc whose endpoints $\partial\alpha=\alpha\cap F$ lie the same circle $\gamma$ of $F\cap S^\pm$. We claim that these endpoints must also lie on the same circle of $F\cap\partial\nu L$.   
Since $\gamma$ bounds a disk $Y\subset F\cap B^\pm$, there is an arc $\beta\subset Y$ with the same endpoints as $\alpha$. The circle $\alpha\cup\beta$ lies on the boundary of some component of $B^\pm\setminus F$, a ball, and therefore bounds a disk $Z\subset B^\pm$ whose interior is disjoint from $F$. 
 The arc $\alpha\subset\partial\nu L$ is parallel in the link exterior through $Z$ to $F$; $\partial$-incompressibility implies that $\alpha$ must also be parallel in $\partial\nu L$ to $F$, and in particular that the endpoints of $\alpha$ must lie on the same component of $F\cap\partial\nu L$, as claimed.
\end{proof}
The following special case is particularly noteworthy:
\begin{lemma}\label{L:inessentialCase}
The two arcs of $F\cap\partial\nu L$ traversing each over/underpass lie on distinct circles of $F\cap S^\pm$.
\end{lemma}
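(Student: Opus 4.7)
The plan is to derive Lemma \ref{L:inessentialCase} as an immediate corollary of Lemma \ref{L:inessential} by exhibiting a well-chosen arc $\alpha$. Fix an over/underpass of $\partial\nu L$; without loss of generality it sits in $\partial\nu L\cap S^+$. Let $a_1,a_2$ be the two arcs of $F\cap\partial\nu L$ traversing this disk. The goal is to produce an arc $\alpha\subset\partial\nu L\cap S^+$ joining a point of $a_1$ to a point of $a_2$ whose endpoints lie on distinct circles of $F\cap\partial\nu L$. Lemma \ref{L:inessential} will then force those endpoints onto distinct circles of $F\cap S^+$, and since each $a_i$ is connected and contained in $F\cap\partial\nu L\cap S^+\subset F\cap S^+$, the arcs $a_1$ and $a_2$ themselves must lie on distinct circles of $F\cap S^+$.

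The crucial preliminary step is to verify that $a_1$ and $a_2$ already lie on distinct components of $F\cap\partial\nu L$, since then any reasonable $\alpha$ will do. Each component of $F\cap\partial\nu L$ is, by the setup of \textsection\ref{S:23}, the image of a section of the disk bundle $\pi:\nu L\to L$, so it meets each meridian of $\partial\nu L$ in exactly one point. Restricting to the arc of $L$ that is the overpass in question, such a section traces a single connected arc on $\partial\nu L$. The standing condition $F\cap C\cap\partial\nu L=\varnothing$ forces this arc to lie entirely on the $S^+$ side, hence inside the overpass disk of $\partial\nu L$, running from one end to the other. Thus every section crossing this overpass region of $L$ contributes exactly one arc to the overpass disk, and the two arcs $a_1,a_2$ must come from two distinct sections, i.e.\ from two distinct circles of $F\cap\partial\nu L$.

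With that established, choose any arc $\alpha$ on the overpass disk of $\partial\nu L$, running from an interior point of $a_1$ to an interior point of $a_2$ with interior disjoint from $F$; such an $\alpha$ exists because the overpass disk is a disk and $F$ meets it in only the two arcs $a_1,a_2$. Applying Lemma \ref{L:inessential} to $\alpha$ completes the argument. I do not anticipate any serious obstacle: the only real content is confirming that the section property combined with $F\cap C\cap\partial\nu L=\varnothing$ precludes the pathological possibility of a single circle of $F\cap\partial\nu L$ wrapping twice across the overpass disk to account for both arcs $a_1$ and $a_2$.
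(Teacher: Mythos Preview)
Your proof is correct and takes essentially the same approach as the paper, which simply writes ``This follows immediately from Lemma \ref{L:inessential}.'' You have unpacked the implicit verification that the two arcs lie on distinct circles of $F\cap\partial\nu L$ (via the section property and the condition $F\cap C\cap\partial\nu L=\varnothing$), which the paper leaves to the reader.
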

\begin{proof}
This follows immediately from Lemma \ref{L:inessential}.
\end{proof}
\subsection{Crossing tubes}\label{S:ct}
Say that $F$ has a standard \textbf{tube} near a crossing ball $C_t$ if there are two arcs $\alpha_1,\alpha_2\subset F\cap S^+\cap S^-$ such that (1) for $r=1,2$, there is an isotopy of $(\alpha_r,\partial\alpha_r)$ through $(S^+\cap S^-\setminus F, S^+\cap S^-\cap\partial\nu L)$ to $(\partial C_t,S^+\cap S^-\cap\partial\nu L\cap\partial C_t)$---i.e., for $r=1,2$, $\alpha_r$ is parallel through $S^+\cap S^-$ to $C_t$, allowing the endpoints to slide along $S^+\cap S^-\cap\partial\nu L$---and (2) the endpoints of $\alpha_1$, $\alpha_2$ are also endpoints of the four arcs of $F\cap\partial\nu L\cap S^\pm$ that traverse the overpass and underpass at $C_t$---i.e. these endpoints are (among) the points of $F\cap\partial\nu L\cap S^+\cap S^-$ closest to $C_t$ in each of the four directions along $\partial\nu L$, in the sense of the disk-bundle $\pi:\nu L\to L$.
\begin{figure}
\begin{center}
\includegraphics[height=1.75in]{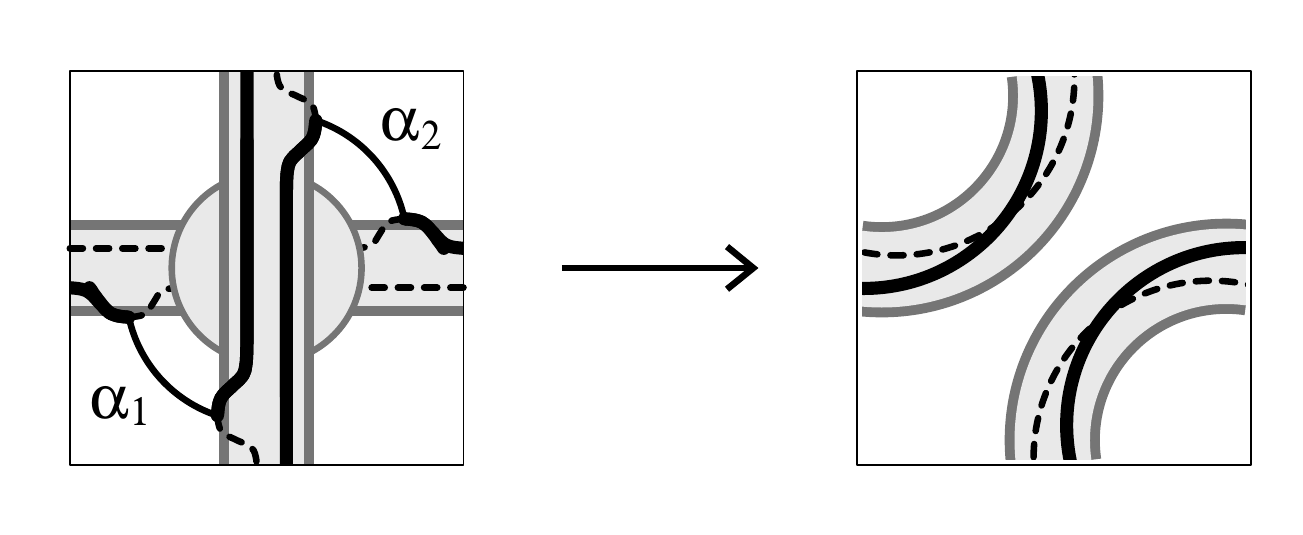}
\hspace{.15in}
\raisebox{.875in}{$\begin{matrix}
\raisebox{-.075in}{\includegraphics[width=.55in]{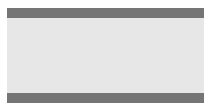}}:&\partial\nu L\\~\\
\includegraphics[width=.5in]{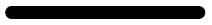}:&F\cap\partial\nu L\cap S^+\\~\\
\includegraphics[width=.5in]{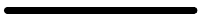}:&F\cap S^+\cap S^-\\~\\
\includegraphics[width=.5in]{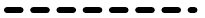}:&F\cap\partial\nu L\cap S^-\\
\end{matrix}$}
\caption{A \textbf{tube} near a crossing ball $C_t$ features two arcs $\alpha_1,\alpha_2\subset F\cap S^+\cap S^-$ parallel through $S^+\cap S^-\setminus F$ to $C_t$ whose endpoints are (among) the points of $F\cap\partial\nu L\cap S^+\cap S^-$ closest to $C_t$ in each direction along $\partial\nu L$. In a minimal crossing link diagram, such a tube gives a compressing disk $Z$ for $F$ in $S^3$ with $|\partial Z\cap L|=2$ (cf. Lemma \ref{L:sufficient}), implying that $r(F,L)\leq 2$. Compressing $F$ along $Z$ changes $\nu L$, $F$ as shown.}
\label{Fi:Smoothing}
\end{center}
\end{figure}
Up to symmetry, this appears as in Figure \ref{Fi:Smoothing}---the arcs $\alpha_1$, $\alpha_2$ must be in opposite quadrants relative to $C_t$, not adjacent ones.  The reason for this is that the only other possibility (cf. Figure \ref{Fi:TwoInnermosts}, second from left) contradicts the essentiality of $F$ in the link exterior, using Lemma \ref{L:inessential}, specifically Lemma \ref{L:inessentialCase}.

A crossing tube sets up a surgery move on $F$, $L$ as follows.  
Each of the two arcs $\alpha_1$, $\alpha_2$ associated with a crossing tube near $C_t$ has one endpoint on an edge-pair incident to the overpass at $C_t$ and the other on an edge-pair incident to the underpass at $C_t$.  The two endpoints on edge-pairs incident to the overpass at $C_t$ can be joined by an arc $\beta_1\subset\partial\nu L\cap (S^-\cup C_t)\setminus F$; likewise, the two endpoints on edge-pairs incident to the underpass at $C_t$ can be joined by an arc $\beta_2\subset\partial\nu L\cap (S^+\cup C_t)\setminus F$. 

The circle $\alpha_1\cup\beta_1\cup\alpha_2\cup\beta_2$ bounds a disk $X$ whose interior lies in $S^3\setminus (F\cup\nu L)$.  Both $\beta_1$, $\beta_2$ are parallel through disks $Y_1,Y_2\subset\nu L$ to arcs $\beta_1',\beta_2'\subset F$ each of which intersects $L$ in a single point.  Thus, the disk $Z:=X\cup Y_1\cup Y_2$ satisfies $Z\cap F=\partial Z=\alpha_1\cup\beta_1'\cup\alpha_2\cup\beta_2'$ with $|\partial Z\cap L|=2$.  Moreover, the arcs $\beta_1'$, $\beta_2'$ can be isotoped so that the two points (total) in which they intersect $L$ are the endpoints of the vertical crossing arc in $C_t$, one on the overpass and the other on the underpass.

The surgery move associated to the crossing tube near $C_t$ (cf. Figure \ref{Fi:Smoothing}) consists of (1) cutting $F$ along $\partial Z$, while cutting $L$ at the two points of $\partial Z\cap L$, and (2) gluing in two parallel copies of the disk $Z$, while joining each pair of endpoints of $L$ on the boundary of the glued-in copy of the disk $Z$ with an arc in that disk.  This surgery move is a compression of $F$ in $S^3$ unless $\partial Z$ bounds a disk in $F$, in which case the surgery move yields a surface with two components, one of them a sphere.  The effect of the surgery move on $L$ is the same as one of two possible ``smoothings'' near $C_t$, in the traditional sense from skein relations. After the surgery move, the resulting link is again embedded in the resulting surface.
\begin{lemma}\label{L:sufficient}
Given a crossing tube in a minimal crossing diagram of a non-split link, the associated surgery move is 
 a compression---the boundary of the surgery disk $Z$ does not bound a disk in $F$.
\end{lemma}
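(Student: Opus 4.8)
The plan is to argue by contradiction: suppose $\partial Z$ bounds a disk $D\subset F$. The idea is that such a disk $D$, glued to the bigon-like disk $Z$, would produce a sphere in $S^3$ that, since $L$ is non-split, bounds a ball on both sides; tracking what happens to the link diagram inside this ball will force the diagram to have fewer crossings than $n$, contradicting minimality. More precisely, $D$ must contain exactly two points of $L$ (the two points of $\partial Z\cap L$), one on the overpass and one on the underpass at $C_t$, and the two arcs of $L$ meeting $\partial D$ connect these points to the rest of $L$ through $D$. So $D$ records a planar "bridge" of the link that realizes one of the two smoothings at $C_t$; the point is to see that this smoothing, or rather the passage from $L$ to its smoothed image, does not change the link type in a way that removes crossings only if the diagram was nonminimal.

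Concretely, I would first note that $Z$ is a disk meeting $L$ in exactly two points, with $\partial Z=\alpha_1\cup\beta_1'\cup\alpha_2\cup\beta_2'$ and $\partial Z\cap L$ consisting of one point on $\beta_1'\cap L$ and one on $\beta_2'\cap L$. If $\partial Z$ bounds a disk $D$ in $F$, then $D\cap L$ is a pair of arcs $e_1,e_2$ of $L$ (the portions of $L$ inside $D$), each with one endpoint on $\beta_1'$ and one on $\beta_2'$, since $L$ is embedded in $F$ and meets $\partial D$ in exactly those two points. Thus $D$ together with these arcs exhibits a way to replace the two strands passing through the crossing ball $C_t$ by two disjoint arcs $e_1,e_2$ running through a disk disjoint from the rest of the diagram — i.e. the crossing at $c_t$ is nugatory or the diagram is reducible, or more carefully, $L$ is isotopic to a diagram with one fewer crossing obtained by the smoothing at $C_t$.

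The key step, and the main obstacle, is turning "$\partial Z$ bounds a disk $D$ in $F$" into an honest statement about the diagram $D$ on $S^2$. The natural route: the sphere $Z'\cup D$ — where $Z'$ is a slightly pushed-off parallel copy of $Z$ — is disjoint from (a sub-link of) $L$, or meets $L$ transversally in a controlled way, and since $L$ is non-split this sphere bounds a ball; but I have to be careful that $Z$ itself meets $L$, so I should instead argue that the two strands of $L$ through $C_t$ can be isotoped, using $D$ and $Z$, across to the arcs $\beta_1',\beta_2'$ and then the smoothing is realized by an ambient isotopy followed by resolving the crossing. Counting crossings: the smoothed diagram lives on $S^2$ with at most $n-1$ crossings (crossing $c_t$ is gone), and it is a diagram of the same link $L$; this contradicts the hypothesis that the original diagram has the minimal number $n$ of crossings for $L$. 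A subtlety to handle carefully is that the smoothing could in principle create a trivial split component or a reducible diagram; but the resulting link is still $L$ (non-split) embedded in the resulting surface, and a diagram with fewer crossings of a link equivalent to $L$ still contradicts minimality once nugatory crossings are removed, so this case causes no real trouble. I would close by remarking that the same argument shows the surgery move genuinely compresses $F$ rather than splitting off a sphere, which is exactly the stated conclusion.
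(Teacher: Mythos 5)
Your overall strategy (contradict crossing-minimality by producing a diagram of the same link with fewer crossings) is the paper's strategy, but the execution has two problems, one of them fatal. First, a configuration error: since $|\partial D\cap L|=|\partial Z\cap L|=2$, the intersection $D\cap L$ is a properly embedded $1$-manifold in the disk $D$ with exactly two boundary points, hence a \emph{single} arc $\delta$ (plus, a priori, closed components, which non-splitness rules out) --- not ``a pair of arcs $e_1,e_2$, each with one endpoint on $\beta_1'$ and one on $\beta_2'$,'' which would require four boundary points and contradicts your own earlier count. This matters because the object that has to be moved is this one arc $\delta$, which need not stay near $C_t$ and may run through many crossings of the diagram.

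Second, and this is the genuine gap, you never justify the step you yourself flag as the obstacle: that the lower-crossing diagram you produce is a diagram of \emph{the same} link $L$. Smoothing a crossing changes the link type in general, so ``the smoothed diagram \dots is a diagram of the same link $L$'' is not a legitimate move; indeed, even in the degenerate situation at hand the surgery produces $L$ together with a split unknotted component, not $L$. The paper never smooths. Instead it observes that $(D\cup Z)\cap L=\delta$, so the two $2$-spheres bounding a thin regular neighborhood of $D\cup Z$ meet $L$ in two points total; hence one of them is disjoint from $L$, and non-splitness forces $D$ and $Z$ to be parallel through a ball $W$ whose interior misses $L$. One then isotopes the sub-arc $\delta\subset L$ through $W$, fixing the rest of $L$, onto the vertical arc of $C_t$; this is an isotopy of $L$ itself, it destroys every crossing incident to $\delta$ (in particular $c_t$) and creates none, contradicting minimality. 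Your aside about the sphere $Z'\cup D$ bounding a ball points in exactly this direction, but you abandon it (``I have to be careful that $Z$ itself meets $L$'') precisely where the regular-neighborhood/parity argument is needed, and replace it with the unsupported smoothing claim; the ``split component / nugatory crossing'' worry you then wave away is a symptom of that substitution. With the ball-parallelism in hand there is no smoothing, no split component, and no appeal to removing nugatory crossings --- the contradiction is immediate.
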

In particular, a crossing tube in a reduced alternating link diagram contains a ``genuine'' compressing disk $Z$ for $F$ in $S^3$ with $|\partial Z\cap L|=2$.  This lemma will round off the proof of the main theorem in \textsection\ref{S:4}.
\begin{proof}
Let $C_t$ be the crossing ball with the tube in question. Construct the disk $Z$ as before, and suppose for contradiction that $\partial Z$ bounds a disk $Y$ in $F$. 
Since $|\partial Y\cap L|=|\partial Z\cap L|=2$ and $L$ is non-split, $Y\cap L$ consists of a single arc, call it $\delta$. From $(Y\cup Z)\cap L=\delta$ it follows that the two 2-spheres on the boundary of a thin regular neighborhood of $Y\cup Z$ intersect $L$ in a total of two points; hence, one of these 2-spheres is disjoint from $L$.  Since $L$ is non-split, this implies that the disks $Y$ and $Z$ are parallel through a ball $W$ in $S^3$ \textit{whose interior is disjoint from $L$}.
 
The arc $\delta=Y\cap L$ is parallel through the ball $W$ to any arc $\delta'$ in $Z$ that joins the overpass and underpass of $L$ at $C_t$. Taking $\delta'$ to be the vertical arc in $C_t$ that joins the overpass and underpass of $L$, isotope the arc $\delta\subset L$ to $\delta'$, while fixing the rest of $L$.   This isotopy eliminates all crossings incident to $\delta$, including the one at $C_t$, without creating any new ones. We assumed this was impossible.
\end{proof}
\subsection{Height}\label{S:height}
In the broader setting of  \textsection\ref{S:broader}, construct graphs $G^\pm$ (they will be trees) whose vertices correspond to the components of $S^\pm$ cut along $F$ and whose edges correspond to the components of $F\cap S^\pm$, such that
the edge corresponding to each circle $\gamma\subset F\cap S^\pm$ joins the vertices corresponding to the two components of $S^\pm$ cut along $F$ whose boundaries contain $\gamma$. 
The two ``sides of $\gamma$ in $S^\pm$'' refer to the two components of $S^\pm$ cut along $\gamma$ and correspond to the two components of the graph $G^\pm_\gamma$ obtained from $G^\pm$ by deleting the interior of the edge associated with $\gamma$.  Define the \textbf{height} of $\gamma$ on each side to be the \textit{maximum} edge-length among all (simple) paths in the corresponding component of $G^\pm_\gamma$ that start at the (appropriate endpoint of the) edge associated with $\gamma$. 

Thus, innermost circles of $F\cap S^\pm$ have height 0 (to one side), circles which enclose (to one side) innermost circles and no others have height 1, and so on.  Figure \ref{Fi:T34} shows an example---a torus containing the knot $T_{3,4}$ in an almost-alternating diagram, with the associated graph $G^+$. 
\begin{prop}\label{P:Height1Exists}
Either all circles of $F\cap S^\pm$ have height 0 (to one side), or $F\cap S^\pm$ contains a circle with height 1 (to one side). 
\end{prop}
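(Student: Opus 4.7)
The plan is to translate the proposition into a purely graph-theoretic statement about the finite tree $G^\pm$ and then dispatch it with a standard longest-path argument. First I would observe that $G^\pm$ is indeed a tree: $S^\pm$ is a 2-sphere, and $F \cap S^\pm$ consists of disjoint simple closed curves (since both surfaces are closed and transverse), so by the Jordan curve theorem each such circle separates $S^\pm$. Thus every edge of $G^\pm$ is a bridge, and the connected graph $G^\pm$ is a tree. Under this translation, ``$\gamma$ has height $0$ on one side'' means that the corresponding edge has a leaf endpoint in $G^\pm$, while ``$\gamma$ has height $1$ on one side'' means that some edge $e = vw$ has an endpoint $v$ of degree $\geq 2$ such that all neighbors of $v$ in $G^\pm$ other than $w$ are leaves.

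Next I would split on the diameter of $G^\pm$. If the diameter is at most $2$, then $G^\pm$ is either a single vertex, a single edge, or a star. In all three cases every edge has a leaf endpoint, so every circle of $F \cap S^\pm$ has height $0$ on one side, and the first alternative holds (vacuously in the first case).

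If instead the diameter is at least $3$, I fix a longest simple path $v_0 - v_1 - \cdots - v_k$ in $G^\pm$ with $k \geq 3$ and consider the edge $e$ joining $v_1$ to $v_2$. The endpoints $v_0, v_k$ of a longest path in a tree are necessarily leaves. Moreover, by maximality of the path, every neighbor $u$ of $v_1$ other than $v_2$ must also be a leaf: otherwise $u$ has some further neighbor $u'$, and the path $u' - u - v_1 - v_2 - \cdots - v_k$ has length $k+1$, contradicting the choice of $k$. Consequently, in the component of $G^\pm_e$ containing $v_1$, the vertex $v_1$ is joined only to leaves, so every simple path from $v_1$ has length at most $1$. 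Hence the circle corresponding to $e$ has height exactly $1$ on the side of $v_1$, yielding the second alternative.

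I do not expect a serious obstacle here: once $G^\pm$ is recognized as a tree and the height definition is re-read as the eccentricity of the endpoint within the appropriate subtree of $G^\pm_\gamma$, the dichotomy is just the statement that a finite tree either has diameter $\leq 2$ or contains a penultimate vertex on a longest path whose other neighbors are all leaves. The only care needed is to cover the degenerate cases (empty tree, single edge, star) cleanly, which the diameter split handles uniformly.
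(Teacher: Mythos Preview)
Your proof is correct and follows essentially the same idea as the paper's: both arguments locate a height-$1$ circle as the penultimate edge along a maximal path in the tree $G^\pm$. The paper phrases this slightly more directly by starting from any circle of height $k\geq 2$ and walking down the maximal path that realizes this height, noting that the $r^{\text{th}}$ edge along it has height $k-r$; your diameter case-split is a global rephrasing of the same longest-path observation, with the degenerate cases made more explicit.
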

\begin{proof}
Let $\gamma$ be a circle of $F\cap S^\pm$ with height $k\geq 2$ (to one side)---if no such $\gamma$ exists, there is nothing to prove. Consider a maximal edge-length path in $G^\pm_{\gamma}$ that starts at the appropriate end of the edge associated with $\gamma$. This path traverses $k$ edges, 
the $r^ \text{th}$ of which corresponds to a circle of $F\cap S^\pm$ with height $k-r$.
\end{proof}

\section{Consequences of minimality and essentiality}\label{S:3}
To review the setup and preliminary results, a link $L\subset (S^2\setminus C)\cup\partial C$ follows a 
link diagram $D\subset S^2$ with $n$ crossings; $\nu L$ is a regular neighborhood of $L$, seen as (the total space) of a disk-bundle $\pi:\nu L\to L$;  balls $B^\pm$ are the closures of the two components of $S^3\setminus (S^2\cup C\cup\nu L)$, with $S^\pm=\partial B^\pm$ and $S^+\cap S^-=S^2\setminus \text{int}(C\cup\nu L)$; and a closed surface $F$, which contains $L$ and is \textbf{essential} (i.e. incompressible and $\partial$-incompressible in the link exterior, i.e. $r(F,L)\geq 2$), has been isotoped such that:
\begin{itemize}  
\item The restriction $\pi|_F$ is a bundle map;
\item $F$ is transverse to $S^+$, $S^-$;  
\item $F\cap C\cap\partial\nu L=\varnothing$;
\end{itemize}
 and, subject to these conditions, the \textbf{complexity} of $F$ is \textbf{minimized}, specifically:
\begin{itemize}
\item The numbers of components of $F\cap\partial C\setminus\nu L$ and of $F\cap S^+\cap S^-$ have lexicographically been minimized. 
\end{itemize}
With this initial setup, \textsection\ref{S:24} showed that all components of $F\cap B^+$, $F\cap B^-$ and $F\cap C$ are disks; all components of $F\cap\partial\nu L\cap S^\pm$, $F\cap\partial C\cap S^\pm$, and $F\cap S^+\cap S^-$ are arcs; and all components of $B^+\setminus F$ and $B^-\setminus F$ are balls.  Actually, the last two statements follow from the first one, using the essentiality of $F$.  In the somewhat more general setting where this first condition replaces the assumption of minimal complexity, \textsection\ref{S:bigon}-\ref{S:height} established such technical conveniences as bigon moves, crossing tubes, and height.  

Section \ref{S:3} delimits which local configurations are consistent with the initial setup, where $F$ is essential and its complexity is minimized. Most results address the arcs of $F\cap\partial\nu L\cap S^\pm$, $F\cap\partial C\cap S^\pm$, and $F\cap S^+\cap S^-$, and many extend to the case where $F$ is incompressible but $\partial$-compressible in the link exterior.  (One way to extend the proofs, roughly, is to isotope $F$ so as to push any $\partial$-compressing disks into $\nu L$, and then to slide these disks along the link away from the local area under consideration.) 

Several of the proofs require disrupting minimality by creating new components of $F\cap S^+\cap S^-$, usually through a sequence of bigon moves. There are several valid reasons to do this.  In the proof of Lemma \ref{L:EdgeCrossingLoop}, a \textit{temporary} increase in complexity enables the removal of a component of $F\cap\partial C\setminus\nu L$, lessening the complexity of $F$, a contradiction. One case in the proof of Lemma \ref{L:TwoInnermosts} increases complexity in order to reveal a $\partial$-compressing disk, thus contradicting essentiality. The other case in the proof of Lemma \ref{L:TwoInnermosts}---and the proof of the crossing tube lemma in \textsection\ref{S:4}---increase complexity in order to procure a crossing tube. In such cases, Lemma \ref{L:Move} will confirm that the bigon moves, while disrupting minimality, preserve at least the fact that all components of $F\cap B^+$, $F\cap B^-$ and $F\cap C$ are disks, and thus the more general setting of \textsection\ref{S:bigon}-\ref{S:height}. In particular, this will validate further bigon moves and applications of Lemma \ref{L:inessential}. 

%
\subsection{Local possibilities, regardless of alternatingness}
\label{S:31}%
Assume throughout \textsection\ref{S:31} that $D$ is a diagram of a non-trivial, non-split link $L$, and that $L$ is contained in a closed surface $F\subset S^3$ (compact and connected without boundary). Establish all setup from \textsection\ref{S:21}-\ref{S:23}. Assume in particular that $F$ is essential and its complexity $\left(|F\cap\partial C\setminus\nu L|,|F\cap S^+\cap S^-|\right)$ has been minimized.
%
\begin{prop}\label{P:EdgeLoopB}
No arc of $F\cap\partial\nu L\cap S^\pm $ has both endpoints on the same component of $\partial\nu L\cap S^+\cap S^-$. 
\end{prop}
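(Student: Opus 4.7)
The plan is to derive a contradiction with the lexicographic minimality of the complexity of $F$.  Suppose for contradiction that some $\beta\subset F\cap\partial\nu L\cap S^+$ (the $S^-$ case is symmetric) has both endpoints $p,q$ on the same component $e\subset\partial\nu L\cap S^+\cap S^-$.  Let $\gamma_F$ denote the longitude of $F\cap\partial\nu L$ containing $\beta$ and let $\beta^*=\overline{\gamma_F\setminus\beta}$.  Since $\gamma_F$ is a section of $\pi$, exactly one of $\pi(\beta),\pi(\beta^*)$ is the short sub-arc $\lambda$ of the link component joining $\pi(p)$ to $\pi(q)$ within the edge $\ell\subset L$ covered by $e$.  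After possibly swapping $\beta\leftrightarrow\beta^*$ and $S^+\leftrightarrow S^-$, I may assume $\pi(\beta)=\lambda$; then $\beta$ lies entirely in the $S^+$-side rectangle of the edge-pair $\pi^{-1}(\ell)\cap\partial\nu L$---a $2$-cell bounded on one side by $e$---with both endpoints on $e$.

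I then choose $\beta$ innermost, so that the subarc $\alpha\subset e$ from $p$ to $q$ satisfies $\alpha\cap F=\{p,q\}$, and $\beta\cup\alpha$ bounds a disk $D\subset\partial\nu L\cap S^+$ with $D\cap F=\beta$.  Next, I will isotope $F$ by sliding $\beta$ across $D$ onto $\alpha$ and then slightly into $\partial\nu L\cap S^-$.  This is a deformation of the section $\gamma_F$ within $\partial\nu L$; extending via the interval-bundle structure $\pi|_{F\cap\nu L}$ realizes it as an isotopy of $F$ supported in a small neighborhood of $D$ inside $\nu L$, and (after a small perturbation) preserves the conditions that $\pi|_F$ is a bundle map, that $F$ is transverse to $S^\pm$, and that $F\cap C\cap\partial\nu L=\varnothing$.

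Finally, I verify that the complexity decreases.  The support of the isotopy is away from $\partial C$, so $|F\cap\partial C\setminus\nu L|$ is unchanged.  The two points $p,q\in F\cap\partial(S^+\cap S^-)$ are eliminated without introducing new ones, so the total endpoint count of arcs of $F\cap S^+\cap S^-$ drops by $2$; hence the two arcs of $F\cap S^+\cap S^-$ terminating at $p,q$ must combine, either merging into a single arc---strictly reducing $|F\cap S^+\cap S^-|$ by $1$, contradicting minimality---or, in the degenerate case that they were already one and the same arc, closing into a loop, which contradicts Proposition~\ref{P:Arcs} via the essentiality and connectedness of $F$.

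The main obstacle I expect is verifying the local effect of the isotopy on $F\cap S^+\cap S^-$: that the two arcs incident to $p$ and $q$ truly combine, rather than simply relocating their endpoints elsewhere on $\partial\nu L\cap S^+\cap S^-$.  This requires analyzing how $F\cap S^2$ evolves as $F$ passes across $\alpha\subset S^2$, ideally via an explicit local model of the deformation.  A subsidiary delicate point is handling the degenerate ``loop'' sub-case, which relies on translating the new circular component into a compressing-disk or split-sphere contradiction using essentiality together with the non-splitness of $L$.
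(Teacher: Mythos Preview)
Your approach is the paper's: find the bigon $D\subset\partial\nu L\cap S^+$ cobounded by the offending arc and a subarc $\alpha\subset e$, push $F$ across it, and claim the complexity drops. The paper's own proof is one sentence and leaves the complexity drop unexamined; your added detail is where a real problem appears.

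The isotopy cannot be supported in $\nu L$ as you describe. If you fix $F\setminus\nu L$ pointwise and slide the section $\gamma_F$ off of $p,q$, the arcs of $F\cap S^+\cap S^-$ that ended at $p,q$---which live in $S^+\cap S^-\subset S^3\setminus\text{int}(\nu L)$---are left dangling with limit points $p,q$ no longer on $F$; the surface becomes discontinuous. What the paper means by ``isotoping $F$ near $\alpha_1$ through $X$ past $\beta_1$'' is an isotopy supported in a neighborhood of $D$ in $S^3$: a collar of $F$ is rotated about $L$, carrying the adjacent sheet of $F\cap B^+$ through $D$ and across $e$ into $B^-$. This replaces $\beta$ on the circle of $F\cap S^+$ by a short arc of $F\cap S^+\cap S^-$ running parallel to $\alpha$, which joins the two arcs formerly ending at $p$ and $q$ into one. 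That is what actually reduces $|F\cap S^+\cap S^-|$. You flag exactly this as the main obstacle in your last paragraph, and your instinct that the isotopy interacts with $F\cap S^2$ near $\alpha$ is correct; it just contradicts your claimed support.

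Two minor points. The swap $\beta\leftrightarrow\beta^*$ is both unnecessary and ill-posed (since $\beta^*$ need not lie in a single $S^\pm$): in fact $\pi(\beta)$ can never go the long way around, because $\beta\subset S^+$ and $F\cap C\cap\partial\nu L=\varnothing$ force $\pi(\beta)$ to avoid every underpass of its link component, and every component has one (this is exactly the non-splitness argument opening the proof of Proposition~\ref{P:Arcs}). And your loop sub-case does not follow from Proposition~\ref{P:Arcs}, since the corrected push can destroy the disk hypothesis on the $B^-$ side; one clean fix is to observe that in that case $\beta\cup\delta$ is already a full circle of $F\cap S^+$ bounding a disk of $F\cap B^+$, and isotope that disk away directly.
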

\begin{prop}\label{P:CrossingLoopAA}
No arc of $F\cap\partial C\cap S^\pm$ has both endpoints on the same component of $\partial C\cap S^+\cap S^-$.
\end{prop}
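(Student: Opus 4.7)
The plan is to argue by contradiction via an innermost-disk argument on a crossing-ball hemisphere. Suppose some arc $\beta\subset F\cap\partial C\cap S^+$ (the $S^-$ case being symmetric) has both endpoints on a single component $\alpha$ of $\partial C\cap S^+\cap S^-$. Then $\alpha$ is an equatorial arc of some crossing ball $\partial C_t$, and $\beta$ lies in a disk component $Q$ of $\partial C_t\cap S^+$ with $\alpha\subset\partial Q$.

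First I would pass to an innermost such arc. Among all arcs of $F\cap Q$ with both endpoints on $\alpha$, I would choose $\beta$ so that the disk $D\subset Q$ it cuts off (with $\partial D=\beta\cup\alpha''$ for some subarc $\alpha''\subset\alpha$) contains no other such arc in its interior. Proposition~\ref{P:Arcs} gives that $F\cap Q$ is a disjoint union of arcs; any other component whose interior meets $\mathrm{int}(D)$ has endpoints on $\partial D$, and since distinct components are pairwise disjoint those endpoints cannot lie on $\beta\subset F$, so they must lie on $\alpha''\subset\alpha$. Such a component is then itself an arc with both endpoints on $\alpha$ cutting off a strictly smaller disk, contradicting the innermost choice. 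Therefore $\mathrm{int}(D)\cap F=\varnothing$.

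Next I would use $D$ to isotope $F$. Since $D\subset Q$ lies in $\partial C\cap S^+\setminus\nu L$, disjoint from $\partial\nu L$ and from $L$, a small ambient isotopy supported in a bicollar neighborhood of $D$ in $S^3$ pushes the sheet of $F$ crossing $\partial C_t$ along $\beta$ off of $\partial C_t$. Such an isotopy preserves the standing conditions on $F$ (transversality to $S^\pm$, $\pi|_F$ a bundle map, $F\cap C\cap\partial\nu L=\varnothing$), eliminates $\beta$ from $F\cap\partial C\cap S^+$, and introduces no new components of $F\cap\partial C\setminus\nu L$ near $D$. The target is that $|F\cap\partial C\setminus\nu L|$ strictly decrease, contradicting lexicographic minimality of complexity.

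The hard part will be verifying this strict decrease rigorously, since pushing $\beta$ off $\partial C_t$ could in principle merely translate the circle of $F\cap\partial C_t$ containing $\beta$ onto a nearby parallel circle on $\partial C_t$. The model scenario is when that circle consists of $\beta$ together with a single $S^-$-arc $\beta^-$ between the same pair of endpoints; then $\beta^-$ itself is an arc with both endpoints on $\alpha$, an analogous innermost argument in $\partial C_t\cap S^-$ produces a complementary disk $D^-$, and $D\cup D^-$ is a disk in $\partial C_t$ bounded by the whole circle with interior disjoint from $F$. Incompressibility of $F$ then makes this circle bound a disk in $F$, and the resulting 2-sphere (bounding a ball in $S^3\setminus L$ by non-splitness of $L$) supports a standard ambient isotopy strictly reducing the circle count of $F\cap\partial C$. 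In the general case where the containing circle has more arcs, one iterates the innermost choice to reduce to this simple configuration, which is the main technical step.
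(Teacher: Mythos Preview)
Your innermost-disk setup and the push across $D$ match the paper. The difficulty you flag is self-inflicted: you aim to decrease the \emph{first} coordinate $|F\cap\partial C\setminus\nu L|$ of the complexity, and you are right that the local push does not in general kill any whole circle of $F\cap\partial C_t$. Your proposed remedy---upgrade to the full circle containing $\beta$, bound it by a disk on the crossing sphere, invoke incompressibility and non-splitness, then iterate over the remaining arcs---is exactly the ``main technical step'' you leave open, and it is unnecessary.

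The paper's one-line argument targets the \emph{second} coordinate instead. The two endpoints of $\beta$ lie on $\partial C\cap S^+\cap S^-\subset\partial(S^+\cap S^-)$, and by Proposition~\ref{P:Arcs} each is an endpoint of an arc of $F\cap S^+\cap S^-$. Isotoping $F$ near $\beta$ across $D$ past the equatorial subarc $\alpha''$ cancels these two crossings of the circle $\gamma\supset\beta$ with the equator; the circle itself persists on $\partial C_t$ (with one fewer excursion into the $S^+$-hemisphere), so $|F\cap\partial C\setminus\nu L|$ is unchanged while $|F\cap S^+\cap S^-|$ strictly drops. That already contradicts lexicographic minimality---no analysis of the rest of $\gamma$, and no appeal to incompressibility or non-splitness, is required. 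This is why the paper treats Propositions~\ref{P:EdgeLoopB} and~\ref{P:CrossingLoopAA} in parallel: in both cases the push is away from the first-coordinate data and simply eliminates two points of $F\cap\partial(S^+\cap S^-)$.
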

\begin{figure}
\begin{center}
\includegraphics[height=1.75in]{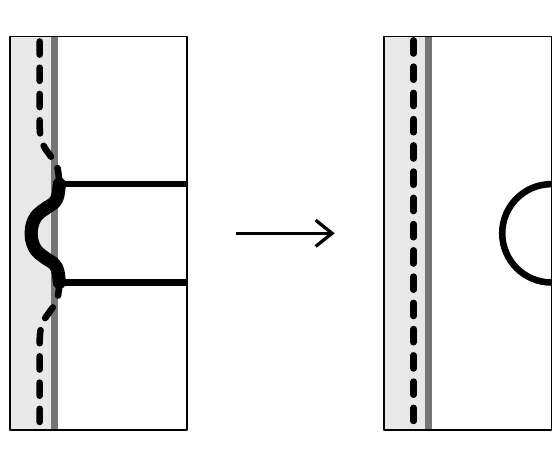}
\hspace{.5in}
\includegraphics[height=1.75in]{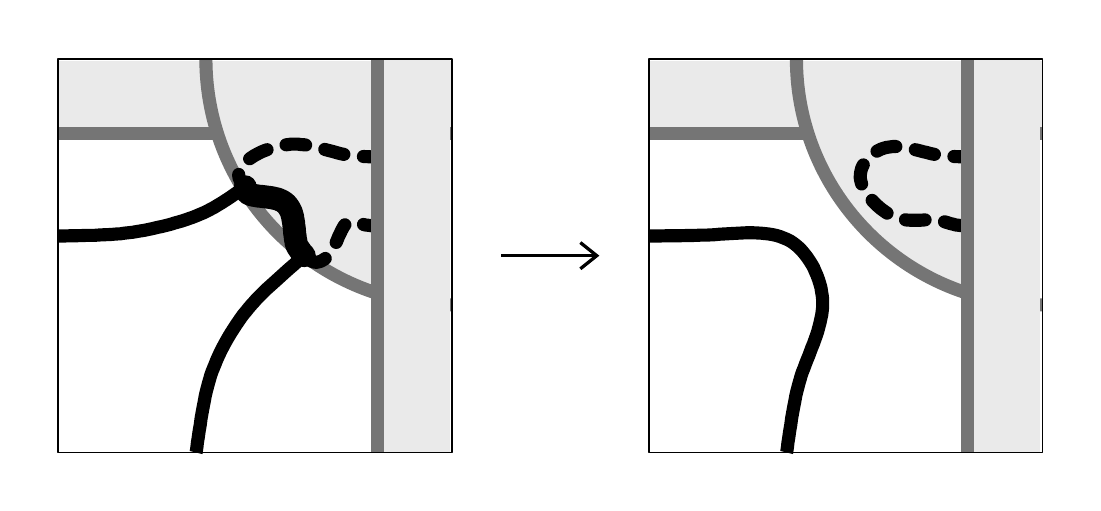}
\caption{No arc of  $F\cap\partial\nu L\cap S^\pm$ is parallel in $\partial\nu L$  to   $S^+\cap S^-$ (left), and no arc of $F\cap\partial C\cap S^\pm$ is parallel in  $\partial C$  to   $S^+\cap S^-$ (right).}
\label{Fi:EdgeLoop}
\end{center}
\end{figure}
 See Figure \ref{Fi:EdgeLoop}. Recall that $F\cap\partial\nu L\cap  C=\varnothing$ by assumption.
\begin{proof}[Proof of Propositions \ref{P:EdgeLoopB}, \ref{P:CrossingLoopAA}]
Any arc $\alpha_1$ of $F\cap\partial\nu L\cap S^\pm$ with endpoints on the same component of $\partial\nu L\cap S^+\cap S^-$ must be parallel through a disk $X\subset\partial\nu L\cap S^\pm$ to an arc $\beta_1\subset\partial\nu L\cap S^+\cap S^-$; but then, isotoping $F$ near $\alpha_1$ through $X$ past $\beta_1$ (Figure \ref{Fi:EdgeLoop}, left) would reduce the complexity of $F$, contrary to assumption. 

Likewise, any arc $\alpha_2$ of $F\cap\partial C\cap S^\pm$ with endpoints on the same component of $\partial C\cap S^+\cap S^-$ must be parallel through a disk $Y\subset\partial C\cap S^\pm$ to an arc $\beta_2\subset\partial C\cap S^+\cap S^-$; but then, isotoping $F$ near $\alpha_2$ through $Y$ past $\beta_2$ (Figure \ref{Fi:EdgeLoop}, right) would reduce the complexity of $F$, contrary to assumption. 
\end{proof}
\begin{lemma}\label{L:saddle}
Every component of $F\cap C\setminus\nu L$ is a disk whose boundary consists of four arcs, alternately on $S^+\cap\partial C$ and $S^-\cap\partial C$, none of which is parallel in $\partial C\setminus\nu L$ to $\partial C\cap S^+\cap S^-$. 
\end{lemma}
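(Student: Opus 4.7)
The plan is to analyze the boundary circle of a component $E$ of $F\cap C\setminus\nu L$, lying in some crossing ball $C_t$, on the annulus $A:=\partial C_t\setminus\text{int}(\nu L)$. Proposition~\ref{P:Disks} already gives that $E$ is a disk; since $F\cap C\cap\partial\nu L=\varnothing$, the boundary $\partial E$ is a single simple closed curve in $A$ (a $2$-sphere minus two open disk-neighborhoods of the overpass and underpass). The four components $e_1,\dots,e_4$ of $\partial C_t\cap S^+\cap S^-$ are properly embedded arcs in $A$ running from one boundary circle to the other, and they cut $A$ into four rectangles $R_1,\dots,R_4$ alternating between $S^+$ and $S^-$.

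Proposition~\ref{P:CrossingLoopAA} forces each arc of $\partial E\cap R_i$ to have one endpoint on $e_i$ and the other on $e_{i+1}$. Writing $n_i$ for the number of arcs of $\partial E$ in $R_i$, and observing that each point of $\partial E\cap e_i$ is the shared endpoint of exactly one arc in $R_{i-1}$ and one in $R_i$, we obtain $n_1=n_2=n_3=n_4=:n$.

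The heart of the argument, and its main obstacle, is to show $n\leq 1$. Because $\partial E$ is simple, the $n$ arcs inside $R_i$ are pairwise disjoint and thus pairwise parallel in the rectangle; they can be labeled $1,\dots,n$ by the linear order of their endpoints along $e_i$, and since two disjoint properly embedded arcs in a disk cannot swap endpoint orderings on opposite sides, this same labeling also orders them along $e_{i+1}$. The annulus $A$ has two topologically distinguished boundary circles (one running around the overpass and one around the underpass), so each $e_j$ inherits a globally coherent orientation from overpass-side to underpass-side, and the $k^{\text{th}}$ point of $\partial E\cap e_i$ in this order is simultaneously the endpoint of the $k^{\text{th}}$ arc in $R_{i-1}$ and of the $k^{\text{th}}$ arc in $R_i$. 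Following this matching around the annulus, the $4n$ arcs thus assemble into $n$ disjoint simple closed curves on $A$; since $\partial E$ is just one of them, $n\in\{0,1\}$.

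If $n=0$, pick $E$ so that the disk $D\subset A$ bounded by $\partial E$ in its rectangle is innermost (its interior disjoint from $F\cap\partial C$). Then the $2$-sphere $E\cup D$ cobounds a $3$-ball $W\subset C_t\setminus\text{int}(\nu L)$ with $\text{int}(W)\cap F=\varnothing$, and pushing $E$ through $W$ past $D$ slightly into the adjacent $B^\pm$ yields an admissible isotopy (preserving the bundle-map, transversality, and $F\cap C\cap\partial\nu L=\varnothing$ conditions) that drops $|F\cap\partial C\setminus\nu L|$ by one, contradicting minimality. Hence $n=1$ and $\partial E$ consists of four arcs, alternating between $S^+$ and $S^-$ as $R_1,R_2,R_3,R_4$ alternate. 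Finally, each of these four arcs has its two endpoints on two distinct components of $\partial C\cap S^+\cap S^-$, so no isotopy in $\partial C\setminus\nu L$ (which must keep endpoints on $\partial C\cap S^+\cap S^-$) can carry it into any single component; hence none of the four arcs is parallel in $\partial C\setminus\nu L$ to $\partial C\cap S^+\cap S^-$.
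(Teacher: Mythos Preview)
Your proof is correct and follows the same route as the paper, which records the lemma as an immediate consequence of Propositions~\ref{P:Disks}, \ref{P:Arcs}, and~\ref{P:CrossingLoopAA}; you have simply unpacked the winding argument that the paper leaves to the reader (once Proposition~\ref{P:CrossingLoopAA} forces every arc in $R_i$ to run from $e_i$ to $e_{i+1}$, the simple closed curve $\partial E$ winds monotonically around the annulus and embeddedness forces exactly one lap). Your direct innermost-disk treatment of the $n=0$ case is fine but could be replaced by a one-line appeal to Proposition~\ref{P:Arcs}, which already forbids circle components of $F\cap\partial C\cap S^\pm$.
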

\begin{figure}
\begin{center}
\includegraphics[height=1.75in]{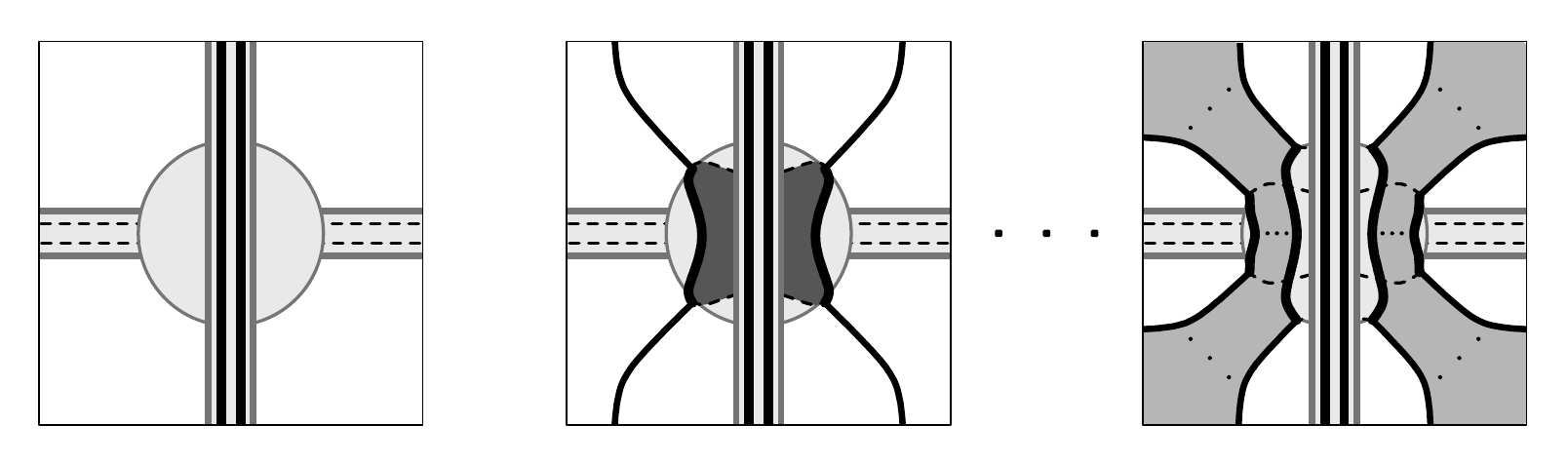}
\caption{An arbitrary crossing ball: (left) disjoint from $F$, (center) intersecting $F$ in a single component, (right) intersecting $F$ in at least two components 
(cf. Lemma \ref{L:saddle}). }
\label{Fi:CrossingSaddle}
\end{center}
\end{figure}
 That is, each component of $F\cap C\setminus\nu L$ looks like a saddle, as in Figure \ref{Fi:CrossingSaddle} (center), and each crossing ball looks like one of the pictures in Figure \ref{Fi:CrossingSaddle}, depending on the number of components in which it intersects $F$.
\begin{proof}
 Lemma \ref{L:saddle} is an immediate consequence of Propositions \ref{P:Disks}, \ref{P:Arcs}, and \ref{P:CrossingLoopAA}.
\end{proof}
\begin{prop}\label{P:EdgeLoopE}
No arc of $F\cap S^+\cap S^-$ has both endpoints on the same component of $\partial\nu L\cap S^+\cap S^-$.
\end{prop}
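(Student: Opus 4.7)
The plan is to argue by contradiction. Suppose $\alpha\subset F\cap S^+\cap S^-$ has endpoints $p_1,p_2$ on a common arc $\eta$ of $\partial\nu L\cap S^+\cap S^-$, and let $\eta'\subset\eta$ denote the subarc between $p_1$ and $p_2$. Then $\alpha\cup\eta'$ is a simple closed curve in $S^2$ bounding two disks; let $D\subset S^2$ be one chosen to facilitate the reduction below.

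First, I would iteratively reduce to the case where $\text{int}(D)\cap(F\cup\nu L\cup\partial C)=\varnothing$. The key observation is the non-crossing behavior of disjoint arcs on the planar surface $S^+\cap S^-$ whose endpoints share a boundary circle: any arc of $F\cap S^+\cap S^-$ with an endpoint in $\text{int}(\eta')$ cannot cross $\alpha$ nor escape across $\eta'\subset\partial(S^+\cap S^-)$, forcing its other endpoint to lie in $\eta'$ and giving a smaller candidate arc whose associated disk sits strictly inside $D$, yielding the usual innermost-disk reduction. Obstructions inside $D$ from other components of $\partial(S^+\cap S^-)$ (boundary circles of $\partial C\cap S^+\cap S^-$ and arcs of $\partial\nu L\cap S^+\cap S^-$ from other edges) are cleared by analogous reductions, possibly with assistance from auxiliary bigon moves permitted by Lemma \ref{L:MoveCase}.

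With $D$ cleaned, it lies in the link exterior $E$ with $\partial D=\alpha\cup\eta'$, $\alpha\subset F$, $\eta'\subset\partial\nu L$, and $\text{int}(D)$ disjoint from $F\cup\partial\nu L$. If $\eta'$ is not parallel in $\partial\nu L$ to any arc of $F\cap\partial\nu L$---which is immediate whenever $p_1$ and $p_2$ lie on distinct circles of $F\cap\partial\nu L$---then $D$ is a $\partial$-compressing disk for $F$, contradicting essentiality. In the remaining case, $\eta'$ is parallel to some arc $\beta\subset F\cap\partial\nu L$ through a disk $\Delta\subset\partial\nu L$; after analogously cleaning $\text{int}(\Delta)\cap F$, the union $D\cup\Delta$ is a disk with boundary $\alpha\cup\beta\subset F$, and a finger-move isotopy of $F$ across $D\cup\Delta$ removes the arc $\alpha$ from $F\cap S^+\cap S^-$ (strictly decreasing $|F\cap S^+\cap S^-|$) while leaving $|F\cap\partial C\setminus\nu L|$ unchanged, contradicting the minimality of the complexity of $F$.

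The main obstacle will be executing the innermost-disk reduction cleanly: $\text{int}(D)$ may a priori contain crossing-ball circles, pieces of $\nu L$ from other edges, and other arcs of $F\cap S^+\cap S^-$ whose endpoints do not both lie on $\eta$, and verifying that all such obstructions can be removed---without creating new obstructions or breaking the background assumption that all components of $F\setminus(S^+\cup S^-\cup\nu L)$ remain disks throughout---is the delicate step, as is the parallel cleanup of $\Delta$.
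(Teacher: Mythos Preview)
Your approach has a genuine gap in the innermost-disk reduction. You want to reduce to the case where $\text{int}(D)\cap(F\cup\nu L\cup\partial C)=\varnothing$ for a planar disk $D\subset S^2$ bounded by $\alpha\cup\eta'$. But the component of $S^+\cap S^-$ containing $\alpha$ need not be simply connected: $\text{int}(D)$ may contain crossing balls $C_s$ or pieces of $\nu L$ coming from other edges of the diagram. These are fixed parts of the crossing-ball structure and cannot be removed by bigon moves, nor by passing to ``inner'' arcs---an arc of $F\cap S^+\cap S^-$ starting at a point of $\text{int}(\eta')$ may well terminate on $\partial C_s$ or on another edge of $\partial\nu L$ inside $D$, so your claim that its other endpoint is forced back to $\eta'$ fails exactly when such obstructions are present. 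Your appeal to ``analogous reductions, possibly with assistance from auxiliary bigon moves'' does not address this; Lemma~\ref{L:MoveCase} governs whether bigon moves preserve the disk condition on $F$, not whether they can evacuate crossing balls from a planar region.

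The paper avoids this difficulty by never attempting to build the disk in $S^2$. After choosing an outermost such arc so that $\text{int}(\eta')\cap F=\varnothing$ (this much you also need, and Proposition~\ref{P:EdgeLoopB} is what makes it work), the paper simply invokes Lemma~\ref{L:inessential}: since the endpoints of $\eta'\subset\partial\nu L\cap S^{\pm}$ lie on the same circle of $F\cap S^{\pm}$ (they are joined by $\alpha$), that lemma's proof produces the candidate $\partial$-compressing disk inside the $3$-ball $B^{\pm}$, using the disk of $F\cap B^{\pm}$ and the ball structure of $B^{\pm}\setminus F$ from Propositions~\ref{P:Disks} and~\ref{P:Balls}. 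A disk in $B^{\pm}$ is automatically disjoint from $C$ and from the rest of $\nu L$, so no planar cleanup is required. The contradiction is then immediate: either $\eta'$ gives a $\partial$-compression, or $\eta'$ is parallel in $\partial\nu L$ to an arc of $F\cap\partial\nu L\cap S^{\pm}$ with both endpoints on $\eta$, which is exactly what Proposition~\ref{P:EdgeLoopB} forbids. (This also shortcuts your second case: rather than a finger-move complexity argument, the parallel arc $\beta$ already contradicts Proposition~\ref{P:EdgeLoopB}.) The missing idea in your attempt is precisely this: use the three-dimensional disk supplied by Lemma~\ref{L:inessential} instead of a two-dimensional one in $S^2$.
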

\begin{prop}\label{P:CrossingLoopE}
No arc of $F\cap S^+\cap S^-$ has both endpoints on the same crossing ball.  
\end{prop}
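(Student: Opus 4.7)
The plan is to derive a contradiction from the assumption that some arc $\alpha\subset F\cap S^+\cap S^-$ has both endpoints $p,q$ on $\partial C_t$. Since $\alpha$ lies in $S^2$ and $p,q$ lie on the circle $\partial C_t\cap S^2$, the arc $\alpha$ together with an arc $\gamma\subset\partial C_t\cap S^2$ between $p$ and $q$ bounds a disk $R\subset S^2$; of the two choices of $\gamma$, exactly one yields an $R$ disjoint from the crossing point $c_t$. Among all configurations $(\alpha,\gamma)$ arising this way (over all crossing balls), I would select one in which $R$ is innermost: no other arc of $F\cap S^+\cap S^-$ in the interior of $R$ has both endpoints on a single crossing ball, and $R$ contains no crossing ball other than $C_t$ in its interior.

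Second, I would locate the endpoints $p,q$ within the saddle decomposition of $F\cap C_t$ provided by Lemma \ref{L:saddle}. Each endpoint is a vertex of some saddle disk of $F\cap C_t\setminus\nu L$, and the two endpoints either belong to the same saddle or to distinct saddles. This dichotomy drives the remainder of the argument, in parallel with Proposition \ref{P:EdgeLoopE}.

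In the distinct-saddle case, I would use $R$ together with the saddles $Y_p,Y_q$ containing $p,q$ to guide an isotopy that merges $Y_p$ and $Y_q$ into a single disk in $F\cap C_t\setminus\nu L$; schematically, a thin collar of $Y_p\cup R\cup Y_q$ absorbs $\alpha$ and replaces $\partial Y_p\cup\partial Y_q\subset F\cap\partial C\setminus\nu L$ by a single circle. This reduces $|F\cap\partial C\setminus\nu L|$ by one, contradicting the minimality of the complexity of $F$. In the same-saddle case, I would distinguish the sub-case in which $p,q$ are adjacent vertices of the saddle from the sub-case in which they are diagonal vertices, and in each sub-case use $R$ either to perform a bigon-style isotopy that fixes $|F\cap\partial C\setminus\nu L|$ while reducing $|F\cap S^+\cap S^-|$ (where Lemma \ref{L:Move} confirms that the disk structure of $F\setminus(S^+\cup S^-\cup\nu L)$ is preserved), or to produce via Lemma \ref{L:inessential} a boundary-compressing disk for $F$ in the link exterior, contradicting essentiality.

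The main obstacle is the same-saddle sub-case: one must verify that the proposed isotopy yields a genuine reduction rather than merely rearranging the intersection pattern, and that the innermost choice of $R$ leaves no residual arcs obstructing the move. This typically requires invoking the non-splitness of $L$ so that the 2-sphere formed by $R$ together with a suitable subdisk of $F$ bounds a 3-ball disjoint from $L$, in the spirit of the argument in Lemma \ref{L:Move}.
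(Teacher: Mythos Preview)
Your core idea matches the paper's: reduce $|F\cap\partial C\setminus\nu L|$ by pushing the offending arc into the crossing ball so that two saddle disks merge. The paper's proof is two sentences. It chooses $\alpha_0$ \emph{outermost in} $S^+\cap S^-$, meaning $\alpha_0$ is parallel through $S^+\cap S^-\setminus F$ to $\partial C_t$; then it pushes $F$ near $\alpha_0$ across that parallelism region and past $\partial C_t$, which ``attaches two saddle-shaped disks'' and lowers the complexity.

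Your plan has a genuine gap at the innermost step. Your condition on $R$ only excludes other crossing balls and other arcs with both endpoints on a single crossing ball; it still allows $R$ to contain pieces of $\nu L$ and arcs of $F\cap S^+\cap S^-$ that run between $\partial\nu L$ and $\partial C$, or between two different crossing balls. With such debris inside $R$, the isotopy you describe in the distinct-saddle case (``a thin collar of $Y_p\cup R\cup Y_q$ absorbs $\alpha$'') cannot be carried out, since $R\cap F\neq\alpha$. The paper's stronger outermost choice---parallel to $\partial C_t$ through $S^+\cap S^-\setminus F$---gives an $R$ that lies entirely in a single component of $S^+\cap S^-$ and is disjoint from $F$ except along $\alpha$; this is exactly what the push requires, and such an $\alpha_0$ always exists by the usual outermost-arc argument in the disk component of $S^+\cap S^-$.

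That stronger choice also eliminates your same-saddle case entirely. If $R\subset S^+\cap S^-$, then the arc $\gamma=\partial R\cap\partial C_t$ lies in a single component of $\partial C_t\cap S^+\cap S^-$, so $p$ and $q$ sit in the \emph{same} quadrant of $\partial C_t$. But by Lemma~\ref{L:saddle} every saddle boundary visits each quadrant exactly once, so $p$ and $q$ necessarily lie on distinct saddles. Your adjacent/diagonal sub-cases, the appeal to Lemma~\ref{L:inessential}, and the non-splitness argument are therefore all unnecessary; with the correct outermost hypothesis the proof collapses to the single distinct-saddle merge you already described.
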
 
\begin{figure}
\begin{center}
\includegraphics[height=1.75in]{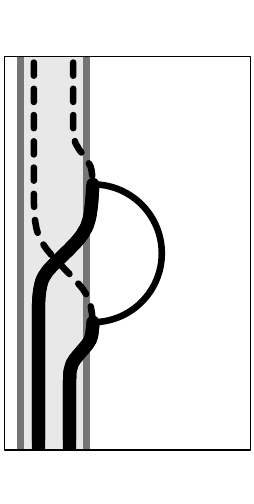}
\hspace{.5in}
\includegraphics[height=1.75in]{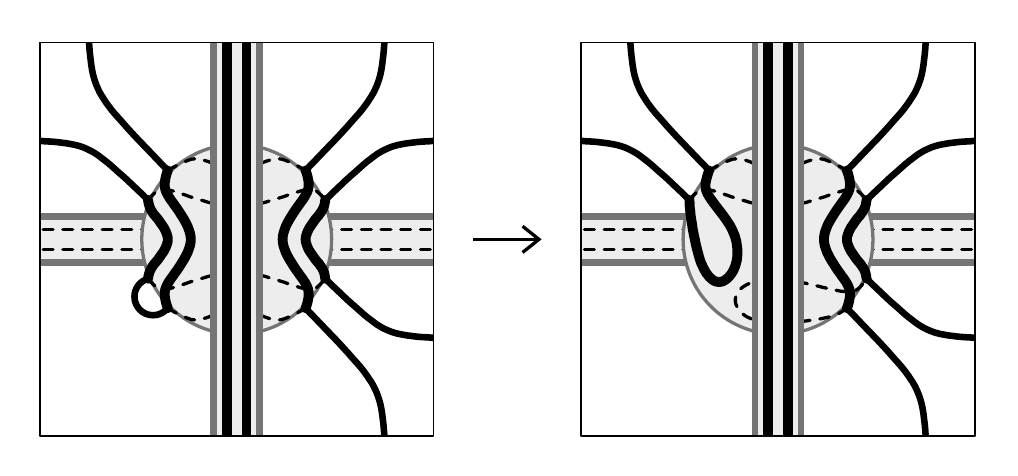}
\caption{No arc of $F\cap S^+\cap S^-$ is parallel in $S^+\cap S^-$ to  $\partial\nu L$ (left) or to $\partial C$ (right).}
\label{Fi:CrossingLoop}
\end{center}
\end{figure}
\begin{proof}[Proof of Propositions \ref{P:EdgeLoopE}, \ref{P:CrossingLoopE}]
If both endpoints of some arc of $F\cap S^+\cap S^-$ were on the same component of $\partial\nu L\cap S^+\cap S^-$ then, applying Proposition \ref{P:EdgeLoopB}, an outermost such arc in $S^+\cap S^-$  would appear as left in Figure \ref{Fi:CrossingLoop}, contradicting the assumed $\partial$-incompressibility of $F$ in the link exterior, e.g. by Lemma \ref{L:inessential}.

Suppose instead that $\alpha_0$ is an arc of $F\cap S^+\cap S^-$ with both endpoints on the same crossing ball $C_t$, and assume that $\alpha_0$ is outermost in $S^+\cap S^-$, i.e. parallel through $S^+\cap S^-\setminus F$ to $C_t$, as in Figure \ref{Fi:CrossingLoop}.  Push $F$ near $\alpha_0$ through $S^+\cap S^-$ past $\partial C_t$. This attaches two saddle-shaped disks in the interior of $C_t$, lessening the complexity of $F$, contrary to assumption.
\end{proof}
\begin{lemma}\label{L:EdgeCrossingLoop}
No arc $\alpha_0\subset F\cap S^+\cap S^-$ has 
one endpoint on a crossing ball and the other on an incident edge of $\partial\nu L$.
\end{lemma}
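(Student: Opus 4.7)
I plan to argue by contradiction, adapting the push argument from the proof of Proposition \ref{P:CrossingLoopE}. Suppose such an arc $\alpha_0 \subset F\cap S^+\cap S^-$ exists, with one endpoint $p$ on $\partial C_t$ and the other $q$ on an edge-pair of $\partial\nu L$ incident to $C_t$. Choose $\alpha_0$ so as to cut off an innermost disk $R \subset S^+\cap S^-$ with boundary $\partial R=\alpha_0\cup\delta_1\cup\delta_2$, where $\delta_1\subset \partial C_t\cap S^+\cap S^-$ runs from $p$ to the corner $c\in\partial C_t\cap\partial\nu L$, and $\delta_2\subset \partial\nu L\cap S^+\cap S^-$ runs from $c$ to $q$ along the incident edge-pair.

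Any arc of $F\cap S^+\cap S^-$ lying in the interior of $R$ would have both endpoints on $\delta_1\cup\delta_2$: both on $\delta_1$ is forbidden by Proposition \ref{P:CrossingLoopE}, both on $\delta_2$ by Proposition \ref{P:EdgeLoopE}, and one on each by the innermost choice (a smaller such arc would cut off a smaller disk). So the interior of $R$ is disjoint from $F$. Next, I would perform an isotopy pushing $F$ near $\alpha_0$ across $R$ past $\delta_1\cup\delta_2$. Near $\delta_1$, this push sends $F$ past $\partial C_t$ into the interior of $C_t$, attaching a saddle-like piece onto $F\cap C_t$ at the saddle already incident to $p$. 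Near $\delta_2$, the push cannot proceed into $\nu L$ without disrupting the bundle structure of $F\cap \nu L$, so instead it rounds the corner $c$ along the overpass/underpass region of $\partial\nu L$ at $C_t$. That this rounding is unobstructed hinges on the setup assumption $F\cap C\cap\partial\nu L=\varnothing$, which guarantees that $F$ is disjoint from the meridian circle $\partial\nu L\cap\partial C_t$ through $c$.

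The main obstacle is the bookkeeping on the first entry of the complexity: showing that $|F\cap\partial C\setminus \nu L|$ does not increase. I expect the pushed-in piece to merge with the saddle of $F\cap C_t$ already incident to $p$, so that no new boundary circle is created on $\partial C_t$---this is the same sort of accounting implicit in the proof of Proposition \ref{P:CrossingLoopE}. Combined with the removal of $\alpha_0$ from $F\cap S^+\cap S^-$, this gives a net decrease in complexity, contradicting minimality. Should a new circle nonetheless arise in some degenerate configuration, the backup is to apply Lemma \ref{L:inessential} to an arc on $\partial\nu L$ produced by the push, yielding either a $\partial$-compressing disk for $F$ in the link exterior (contradicting essentiality) or, after a bigon move legitimized by Lemma \ref{L:Move}, a further reduction of complexity.
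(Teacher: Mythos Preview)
Your push move has a genuine gap at the corner $c$. In Proposition~\ref{P:CrossingLoopE} the push works because \emph{both} endpoints of $\alpha_0$ lie on $\partial C_t$: pushing past the short arc of $\partial C_t\cap S^+\cap S^-$ sends a band of $F$ into $C_t$ and merges two saddle disks, dropping $|F\cap\partial C\setminus\nu L|$ by one. Here only the endpoint $p$ lies on $\partial C_t$; the other endpoint $q$ lies on $\partial\nu L$. Pushing $F$ past $\delta_2$ literally means pushing into $\nu L$, which destroys the condition that $\pi|_F$ is a bundle map. Your alternative of ``rounding the corner along the overpass/underpass'' does not escape this: the over/underpass of $\partial\nu L$ sits inside $\pi^{-1}(L\cap\partial C)$, so sliding $F$ onto it either creates new points of $F\cap C\cap\partial\nu L$ (violating that standing hypothesis) or adds extra arcs of $F\cap\partial\nu L$ on the pass (violating the section condition, which forces exactly two such arcs). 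In short, there is no isotopy of $F$ supported near $R$ that removes $\alpha_0$ while keeping all three setup conditions; and without those conditions the complexity pair is not defined, so you cannot claim a contradiction with minimality. Your backup via Lemma~\ref{L:inessential} is not fleshed out enough to rescue this: you have not identified an arc on $\partial\nu L$ whose endpoints lie on distinct circles of $F\cap\partial\nu L$.

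The paper's argument is genuinely different and avoids this obstruction. It does not try to push $\alpha_0$ directly into $C_t$. Instead, after taking $\alpha_0$ outermost, it tracks the circle of $F\cap\partial\nu L$ through $q$ toward $C_t$ and splits into two cases according to how that circle next meets $S^+\cap S^-$. In each case a short sequence (at most three) of bigon moves along arcs $\alpha_1,\alpha_2,\alpha_3$---legitimized by Lemma~\ref{L:MoveCase}---produces a configuration in which an arc $\beta\subset F\cap\partial\nu L$ is parallel in $\partial\nu L$ to an arc $\beta'$ with $\beta'\cap F=\partial\beta'$, and $\beta'$ is in turn parallel through $B^\pm$ to an arc $\beta''\subset F$. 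The disk of $F$ between $\beta$ and $\beta''$ contains an entire saddle of $F\cap C_t$, and the sphere $X\cup Y\cup Z$ bounds a ball through which one isotopes that disk off $C_t$. This removes a component of $F\cap\partial C\setminus\nu L$, i.e.\ drops the \emph{first} entry of the complexity, which bigon moves never touch---hence the contradiction.
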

\begin{proof}
Suppose that $\alpha_0$ is an arc of $F\cap S^+\cap S^-$ with one endpoint on a crossing ball $C_t$ and the other on an incident edge of $\partial\nu L$. 
Assume that $\alpha_0$ is outermost in $S^+\cap S^-$, i.e. parallel through $S^+\cap S^-\setminus F$ to $\nu L\cup C$. Consider the circle of $F\cap\partial\nu L$ that contains an endpoint of $\alpha_0$.  Moving along this circle from that endpoint toward $C_t$, there is at most one more point on $S^+\cap S^-$, by Proposition \ref{P:EdgeLoopB} and the assumption that $\alpha_0$ is outermost.  There are thus two cases up to symmetry (cf. Figure \ref{Fi:EdgeCrossingLoop}).

\begin{figure}
\begin{center}
\includegraphics[width=6.5in]{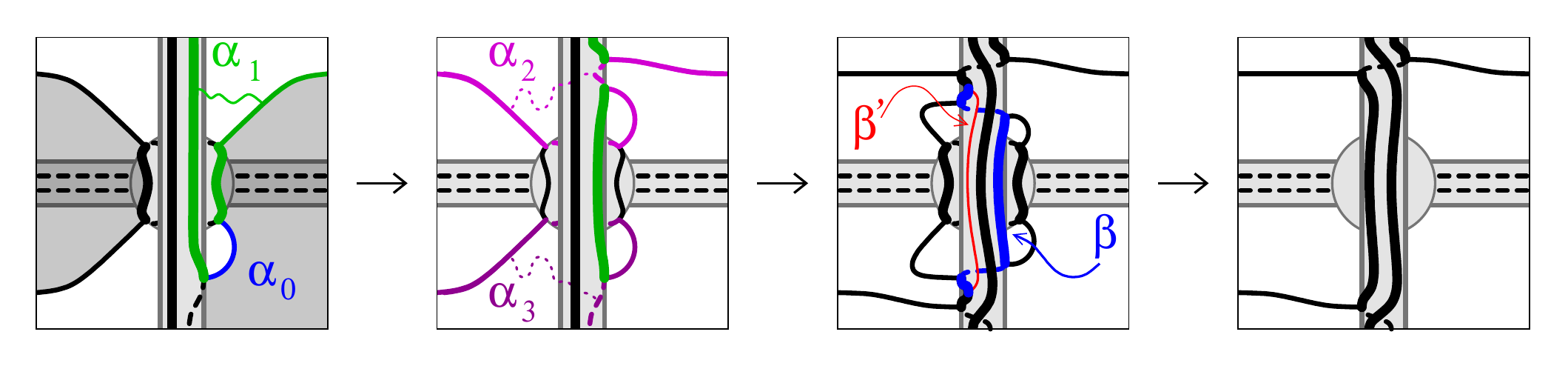}
\includegraphics[width=6.5in]{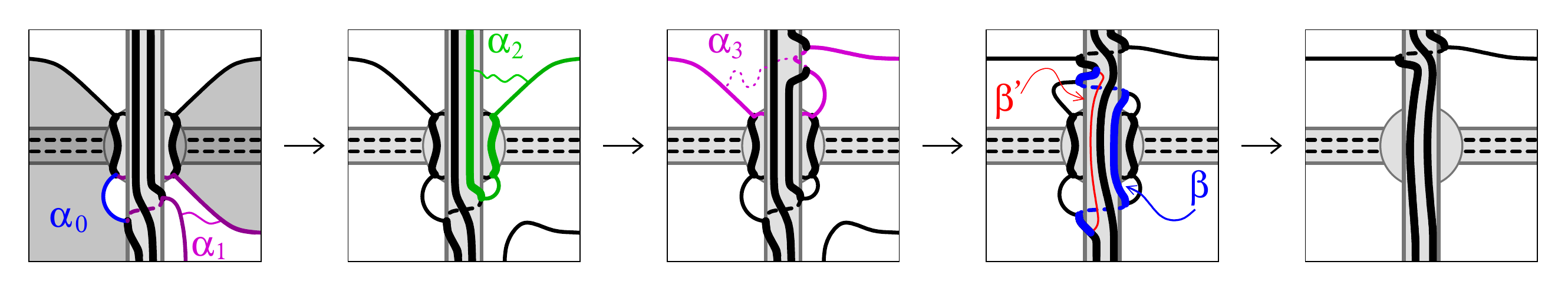}
\caption{No arc $\alpha_0$ of $F\cap S^+\cap S^-$ has one endpoint on a crossing ball and the other on an incident edge of $\partial\nu L$.}
\label{Fi:EdgeCrossingLoop} 
\end{center}
\end{figure}
In either case, begin with a sequence of (up to) three bigon moves,
through the arcs labeled $\alpha_1$, $\alpha_2$, $\alpha_3$ in Figure \ref{Fi:EdgeCrossingLoop}, in that order.  Any of the arcs $\alpha_r$ can be parallel in $S^\pm\setminus C$ to $F$; in this case, omit the bigon move along $\alpha_r$. 
In all cases, this sequence of bigon moves fits the hypotheses of Lemma \ref{L:MoveCase} and thus preserves the fact that all components of $F\setminus(S^+\cup S^-\cup\nu L)$ are disks.  

Now an arc
 $\beta\subset\partial\nu L\cap F$ is parallel through a disk $Y\subset\partial\nu L$ with $Y\cap F=\beta$ to a second arc $\beta'\subset\partial\nu L\cap S^\pm$ with $\beta'\cap F=\partial\beta'$. 
This arc $\beta'$ is parallel to an arc $\beta''\subset F\cap B^\pm$ through a disk $Z\subset B^\pm$ with $Z\cap F=\beta''$. 
Further, $\beta''$ is parallel to $\beta$ through a disk $X\subset F$, which contains an entire disk of $F\cap C\setminus\nu L$.
Finally, the 2-sphere $X\cup Y\cup Z$ bounds a ball $W$ in the link exterior.

Isotope $(X,\beta)$ through $(W,Y)$ to $(Z,\beta')$, while fixing $\beta''=\partial X\cap\partial Z$.  This removes the disk of $X\cap C$ and thus a component of $F\cap C\cap\partial\setminus\nu L$.  Since bigon moves always fix $|F\cap\partial C\setminus\nu L|$, this contradicts the initial assumption that the complexity of $F$ was minimized.
\end{proof}
\subsection{Consequences of alternatingness.}
\label{S:32}
Maintain all setup from \textsection\ref{S:31}, with the additional assumption that $D$ is alternating.  That is, assume throughout \textsection\ref{S:32} that $D$ is a non-trivial, connected, alternating diagram of a link $L$, and that $L$ is contained in a closed essential surface $F\subset S^3$ (compact and connected without boundary) whose complexity $\left(|F\cap\partial C\setminus\nu L|,|F\cap S^+\cap S^-|\right)$ has been minimized.
\begin{figure}
\begin{center}
\includegraphics[width=6.5in]{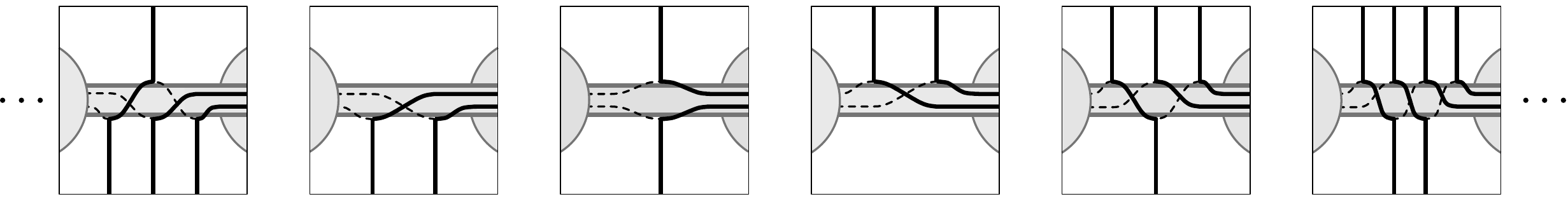}
\caption{The types of edge-pairs of $\partial\nu L$ when $D$ is alternating (cf. Lemma \ref{L:EdgeTypes}).}
\label{Fi:EdgeTypes}
\end{center}
\end{figure} 
\begin{lemma}\label{L:EdgeTypes}
 Every edge-pair of $\partial\nu L$ appears as in Figure \ref{Fi:EdgeTypes}.
\end{lemma}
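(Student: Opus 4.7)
The plan is to use the alternating hypothesis to pin down the distribution of short sides on each edge-pair's boundary, and then to apply the restrictions from \S\ref{S:31} to classify the possible patterns of $F\cap\partial\nu L$ in the two rectangles $\partial\nu L\cap S^\pm$ of the edge-pair.

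First, I would orient the edge-pair so that its left crossing $C_l$ has our strand as the over-strand and its right crossing $C_r$ has our strand as the under-strand; the alternating condition guarantees such an orientation exists. This determines which short sides of the rectangles $\partial\nu L\cap S^\pm$ lie on $\partial C$ and which do not: at $C_l$, the short side of $\partial\nu L\cap S^+$ is a meridian arc adjacent to the overpass, lying in $\partial\nu L\setminus\partial C$, while the short side of $\partial\nu L\cap S^-$ lies on $\partial C_l\cap S^-$; the roles are reversed at $C_r$. Combined with $F\cap C\cap\partial\nu L=\varnothing$ and the fact that each overpass (resp.\ underpass) carries precisely two arcs of $F$, this forces exactly two endpoints of $F\cap\partial\nu L\cap S^+$ on the short side at $C_l$, two endpoints of $F\cap\partial\nu L\cap S^-$ on the short side at $C_r$, and no endpoints of $F$ on the other two short sides (both of which sit on $\partial C$).

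Second, I would invoke Proposition \ref{P:EdgeLoopB}, noting that in this setup each long side of the edge-pair is its own component of $\partial\nu L\cap S^+\cap S^-$, to conclude that no arc of $F$ in either rectangle can have both endpoints on a single long side. Thus every arc of $F$ in the $S^+$ rectangle runs either (a) from one long side to the other, (b) from the forced short side to a long side, or (c) from the forced short side back to itself, and symmetrically in the $S^-$ rectangle. I would then rule out case (c) using Lemma \ref{L:EdgeCrossingLoop}: an arc of type (c) would, upon pushing through the bigon bounded by it and the short side, produce an arc of $F\cap S^+\cap S^-$ with one endpoint on a crossing ball and the other on an incident edge, contradicting that lemma together with minimality.

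Third, I would use Lemma \ref{L:inessentialCase} to require that the two arcs of $F$ traversing the overpass at $C_l$ (respectively the underpass at $C_r$) lie on distinct circles of $F\cap\partial\nu L$, which forces the two ``forced'' arcs in each rectangle to exit onto distinct long sides rather than running parallel into the same one. Matching these arcs across the long sides with whatever additional cross-cutting arcs of type (a) are present, and tracking which arcs belong to the same circle of $F\cap\partial\nu L$, leaves precisely the short list of configurations displayed in Figure \ref{Fi:EdgeTypes}.

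The hard part will be the bookkeeping in step three: one must carefully follow the circles of $F\cap\partial\nu L$ as they cross each long side from $S^+$ to $S^-$, so as not to double-count configurations related by a symmetry or overlook a configuration that passes the local tests of steps one and two, and then verify that every surviving pattern is among the panels of Figure \ref{Fi:EdgeTypes}.
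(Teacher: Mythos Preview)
Your plan is substantially more elaborate than what the paper actually does, and in the process it picks up a couple of errors. The paper's proof is a single sentence: the classification in Figure~\ref{Fi:EdgeTypes} is simply the list of \emph{all} arc patterns in an edge-pair that are consistent with (i) alternatingness, which fixes which short sides lie on $\partial C$ and which do not, (ii) Proposition~\ref{P:Arcs}, which says the components are arcs, and (iii) Proposition~\ref{P:EdgeLoopB}, which forbids both endpoints on a single long side. Nothing beyond straightforward enumeration is needed; in particular, neither Lemma~\ref{L:EdgeCrossingLoop} nor Lemma~\ref{L:inessentialCase} is invoked, and the figure is not meant to exclude the configurations you attempt to rule out in steps two and three.

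Concretely, your step-two argument for eliminating ``type (c)'' breaks down geometrically: the short side of the $S^+$ rectangle at the overpass end lies in $\partial\nu L\cap S^+\setminus\partial C$, not on $\partial C$ and not in $S^+\cap S^-$. Pushing an arc across that short side lands you in the overpass cell of $\partial\nu L\cap S^+$, not in $S^+\cap S^-$, so no arc of $F\cap S^+\cap S^-$ is produced and Lemma~\ref{L:EdgeCrossingLoop} does not apply. In step three you misquote Lemma~\ref{L:inessentialCase}: it asserts that the two overpass arcs lie on distinct circles of $F\cap S^\pm$, not on distinct circles of $F\cap\partial\nu L$, so it does not force the two forced arcs onto distinct long sides. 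The upshot is that your first step is exactly right and already does all the work; drop steps two and three and simply enumerate the arc patterns compatible with Proposition~\ref{P:EdgeLoopB}.
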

\begin{proof}
 This follows immediately from Propositions \ref{P:Arcs} and \ref{P:EdgeLoopB} and the alternatingness of $D$.
\end{proof}
\newpage
\begin{prop}\label{P:InnermostCrossingBall}
If $\gamma$ is an innermost circle of $F\cap S^\pm$, then $\gamma\cap\partial C=\varnothing$.
\end{prop}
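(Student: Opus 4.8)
The plan is to argue by contradiction: suppose $\gamma$ is an innermost circle of $F\cap S^\pm$ with $\gamma\cap\partial C\neq\varnothing$. First I would use the cell structure of $\partial\nu L\cup\partial C$ together with Propositions \ref{P:Arcs}, \ref{P:CrossingLoopAA}, and Lemma \ref{L:saddle} to understand how $\gamma$ can meet the crossing balls: each arc of $\gamma\cap\partial C\cap S^\pm$ is a component of $F\cap\partial C\cap S^\pm$, hence (by Lemma \ref{L:saddle}) a non-parallel arc of a saddle-disk of $F\cap C\setminus\nu L$, so $\gamma$ passes through $C_t$ entering and leaving through adjacent sides. Since $\gamma$ is innermost, the disk $D_\gamma\subset S^\pm$ it bounds on the ``inner'' side contains no other circle of $F\cap S^\pm$, so $D_\gamma\cap F=\gamma$; this disk records how $\gamma$ wanders between over/underpasses, edges of $\partial\nu L$, and crossing balls.

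The key step is then a case analysis on what $\gamma$ meets as we traverse it. By Proposition \ref{P:CrossingLoopE} no arc of $F\cap S^+\cap S^-$ has both endpoints on the same crossing ball, by Lemma \ref{L:EdgeCrossingLoop} no such arc joins a crossing ball to an incident edge of $\partial\nu L$, and by Propositions \ref{P:EdgeLoopB}, \ref{P:EdgeLoopE}, \ref{P:CrossingLoopAA} no arc of $F\cap\partial\nu L\cap S^\pm$ or $F\cap\partial C\cap S^\pm$ is ``parallel'' back to $S^+\cap S^-$. Combining these constraints with the fact that $D$ is alternating (so the edge-pairs of $\partial\nu L$ look as in Lemma \ref{L:EdgeTypes}) and that $\gamma$ bounds a disk meeting $F$ only in $\gamma$, I would show that an innermost $\gamma$ that touched $\partial C$ would be forced either to run along an over/underpass in a way that puts its two traversing arcs of $F\cap\partial\nu L$ on the same circle of $F\cap S^\pm$ (contradicting Lemma \ref{L:inessentialCase}), or to bound, together with a saddle-disk of $F\cap C\setminus\nu L$ and a piece of $\partial\nu L$, a configuration one can eliminate by a complexity-reducing isotopy (a bigon move pushing $\gamma$ off $\partial C_t$ across the inner disk $D_\gamma$), contradicting minimality of the complexity $(|F\cap\partial C\setminus\nu L|,|F\cap S^+\cap S^-|)$. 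The point is that ``innermost'' leaves no room for the extra structure a crossing ball demands.

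The hard part will be organizing the case analysis cleanly: an innermost circle meeting $\partial C$ could in principle pass through several crossing balls, weave among several edge-pairs, and visit both over- and underpasses, so I need to extract from ``innermost'' enough rigidity to reduce to a short list of local pictures. I expect the right move is to look at the disk $D_\gamma$ and an outermost arc of $D_\gamma\cap\partial(C\cup\nu L)$ inside it, which cuts off a sub-disk meeting $\partial(C\cup\nu L)$ in a single arc; analyzing that single arc (is it on $\partial C$? on $\partial\nu L$? spanning an over/underpass?) against the prohibitions already established should pin down a contradiction in each case. Once the configuration is localized this way, each subcase is a direct appeal to one of Proposition \ref{P:EdgeLoopB}, \ref{P:CrossingLoopAA}, \ref{P:EdgeLoopE}, \ref{P:CrossingLoopE}, Lemma \ref{L:EdgeCrossingLoop}, Lemma \ref{L:inessentialCase}, or a minimality-violating isotopy, so the remaining work is bookkeeping rather than new ideas.
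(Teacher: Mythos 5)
Your overall skeleton---pass to the innermost disk $X\subset S^\pm\setminus F$ bounded by $\gamma$ and analyze, via Lemma \ref{L:saddle}, how it meets the crossing balls---is indeed the paper's skeleton, but the mechanism you offer for the contradiction does not work as stated. A bigon move can never reduce complexity: by definition it follows an arc disjoint from $\pi^{-1}(C)$, so it fixes $|F\cap\partial C\setminus\nu L|$ and fixes or increases $|F\cap S^+\cap S^-|$; hence your ``complexity-reducing isotopy (a bigon move pushing $\gamma$ off $\partial C_t$ across the inner disk $D_\gamma$)'' is not an available move. Nor can $\gamma$ simply be pushed off $\partial C_t$: each arc of $\gamma\cap\partial C_t$ is a boundary arc of a saddle of $F\cap C_t\setminus\nu L$, so eliminating it means eliminating or merging a saddle, which is exactly the nontrivial reduction of $|F\cap\partial C\setminus\nu L|$ that has to be engineered. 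In the paper this is done, in each of the three cases for a component $Y$ of $X\cap\partial C$ (two arcs of $\gamma\cap\partial C_t$ on $\partial Y$, or an arc of $\partial Y$ on $\partial\nu L$ along the underpass, or along the overpass), by an explicitly ordered sequence of bigon moves---kept legitimate by Lemma \ref{L:MoveCase}---that manufactures an arc of $F\cap S^+\cap S^-$ with both endpoints on $C_t$, or with one endpoint on $C_t$ and one on an incident edge, after which one re-runs the complexity-reducing isotopies from the \emph{proofs} of Proposition \ref{P:CrossingLoopE} and Lemma \ref{L:EdgeCrossingLoop} (the push into the crossing ball, respectively the isotopy across the ball $W$ deleting a saddle), driving $|F\cap\partial C\setminus\nu L|$ below its assumed minimum. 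Citing those statements as black-box ``prohibitions'' is not enough, because after the preparatory bigon moves minimality has been disrupted and it is the moves inside their proofs, not their conclusions, that are reused.

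This points to the second, more basic gap: your expectation that each subcase is ``a direct appeal'' to the results of \textsection\ref{S:31} plus bookkeeping cannot be realized, since none of those results forbids an innermost circle from meeting $\partial C$---the $T_{3,4}$ configuration of \textsection\ref{S:5} satisfies every conclusion of \textsection\ref{S:31} and yet has innermost circles running over crossing balls. So alternatingness must do real work, and you need to say where it enters: the paper uses it precisely to guarantee (Lemma \ref{L:EdgeTypes}) that every edge of $\partial\nu L$ carries endpoints of $F\cap S^+\cap S^-$, which is what makes the initial bigon moves in the underpass/overpass cases possible at all. You mention Lemma \ref{L:EdgeTypes} in passing but never tie it to a concrete move, and the alternative contradiction you anticipate via Lemma \ref{L:inessentialCase} is not how this proposition is settled (that lemma is used later, in Proposition \ref{P:InnermostPass} and Lemma \ref{L:TwoInnermosts}); as written, your case analysis bottoms out in assertions equivalent to the statement being proved. (A smaller point: $D_\gamma\cap\partial(C\cup\nu L)$ is not a union of arcs, since $S^\pm$ contains two-dimensional pieces of $\partial C$ and $\partial\nu L$; the curves you want to take outermost are the components of $X\cap\partial C$, as in the paper, or the seams $X\cap\partial(S^+\cap S^-)$.)
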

\begin{figure}
\begin{center}
\includegraphics[width=6.5in]{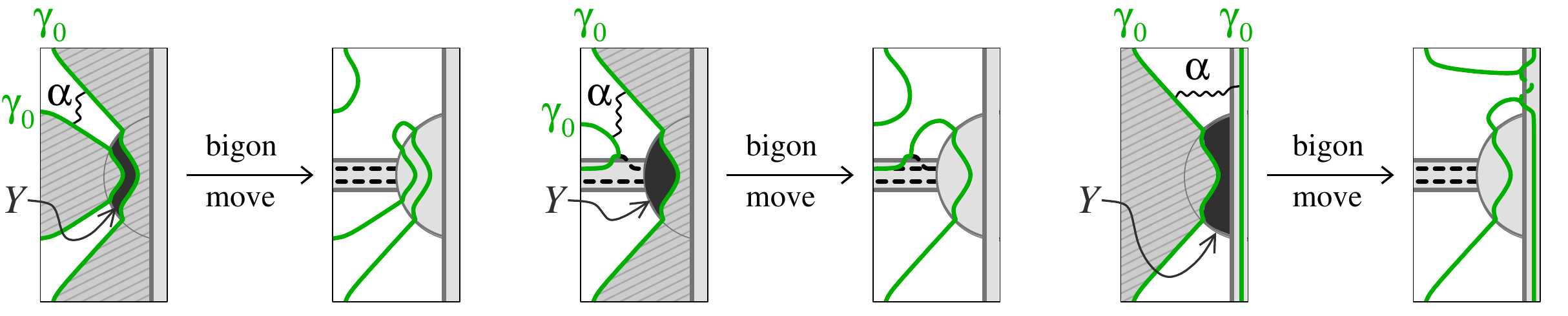}
\caption{Every innermost circle $\gamma_0\subset F\cap S^\pm$ is disjoint from $\partial C$ (cf. Proposition \ref{P:InnermostCrossingBall}).}\label{Fi:InnermostCrossingBall}
\end{center}
\end{figure} 
\begin{proof}
Let $\gamma_0$ be an innermost circle of $F\cap S^+$ (\textsc{wlog}), and suppose $\gamma_0\cap\partial C\neq\varnothing$.  Then the disk $X\subset S^+\setminus F$ with $\partial X=\gamma_0$ intersects $\partial C$; let $Y$ be a component of $X\cap\partial C$.  There are now three cases, using Lemma \ref{L:saddle} (cf. Figure \ref{Fi:InnermostCrossingBall}): in one case (left), $\partial Y$ contains two arcs of $\gamma_0\cap\partial C$. Otherwise  $\partial Y$ contains an arc on $\partial\nu L$, along either the underpass (center) or the overpass (right) at $C_t$.

If $\partial Y$ contains more than one arc of $\gamma_0\cap C_t$ (left in Figure \ref{Fi:InnermostCrossingBall}), then a bigon move yields an arc of $F\cap S^+\cap S^-$ with both endpoints on $C_t$.  This contradicts the minimality of $F\cap\partial C\setminus\nu L$ (recall Figure \ref{Fi:CrossingLoop}, right, and the proof of Proposition \ref{P:CrossingLoopE}). 
%

In both remaining cases (center and right), a bigon move yields an arc of $F\cap S^+\cap S^-$ with one endpoint on $C_t$ and the other on an incident edge of $\partial\nu L$. Crucial to the initial bigon moves in Figure \ref{Fi:InnermostCrossingBall}, center and right, is that every edge of $\partial\nu L$ contains endpoints of $F\cap S^+\cap S^-$, by alternatingness. Next, perform the sequence of bigon moves from the top of Figure \ref{Fi:EdgeCrossingLoop} (omitting any trivial ones as usual), to set up the final isotopy move from that sequence.  This final move is valid, since the preceding sequence of bigon moves meets the conditions of Lemma \ref{L:MoveCase}, and it removes a component of $F\cap C\cap\partial\setminus\nu L$.  Since bigon moves always fix $|F\cap\partial C\setminus\nu L|$, this contradicts the initial assumption that the complexity of $F$ was minimized.
\end{proof}
\begin{prop}\label{P:InnermostPass}
If $\gamma$ is an innermost circle of $F\cap S^\pm$, so that $\gamma$ bounds a disk $X\subset S^\pm$, then at least one component of $\gamma\cap\partial\nu  L$ traverses an over/underpass.
\end{prop}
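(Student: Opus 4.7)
The plan is to argue by contradiction, assuming $\gamma$ is innermost in $F\cap S^\pm$ but no arc of $\gamma\cap\partial\nu L$ traverses an over/underpass, and to derive a contradiction with Proposition \ref{P:EdgeLoopE}. First I would dispose of the degenerate case: by Proposition \ref{P:InnermostCrossingBall}, $\gamma\cap\partial C=\varnothing$; if also $\gamma\cap\partial\nu L=\varnothing$, then $\gamma$ would be an entire circle component of $F\cap S^+\cap S^-$, contradicting Proposition \ref{P:Arcs}. Thus $\gamma$ must have arcs in edge 2-cells of $\partial\nu L\cap S^\pm$, each a chord from one long side to the other by Proposition \ref{P:EdgeLoopB}, alternating with arcs of $\gamma$ in $S^+\cap S^-$.

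Next I would analyze the disk $X\subset S^\pm$ with $\partial X=\gamma$ and $\text{int}(X)\cap F=\varnothing$. Because $\gamma$ avoids every hemisphere $\partial C_t\cap S^\pm$ and every over/underpass 2-cell, each such 2-cell lies entirely inside or entirely outside $X$. An over/underpass inside $X$ would trap its two arcs of $F$ in $\text{int}(X)\cap F$, contradicting innermostness; a hemisphere inside $X$ would place its $\partial C\cap\partial\nu L$ boundary arcs (which meet neither $\gamma$ nor $F$) in $\text{int}(X)$, forcing the adjacent over/underpass inside $X$ too---again a contradiction. So $X\subset(S^+\cap S^-)\cup(\text{edges of }\partial\nu L\cap S^\pm)$. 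I would then push the picture down to $S^2$ via the natural projection that is the identity on $S^+\cap S^-$ and sends each edge 2-cell homeomorphically onto the strip $\nu L\cap S^2$ over the corresponding edge of $L$. This restricts to an embedding of $X$, so the image $X''$ is a topological disk in $S^2\setminus\text{int}(C)$; being a disk embedded in $S^2$, $X''$ encloses no crossing ball.

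Under this projection each arc of $\gamma$ in an edge 2-cell becomes a short arc crossing its edge of $D$ transversely once. Since every edge of $L$ has its endpoints at crossings outside $X''$, a parity count shows $\gamma''=\partial X''$ meets each edge in an even number of points, and the edge segments lying in $X''$ form a nonempty family of pairwise disjoint chords of $X''$. I would then pick an innermost chord $c\subset X''$: a segment together with an arc $\alpha'\subset\gamma''$ sharing $c$'s endpoints, so that $c\cup\alpha'$ bounds a sub-disk $X_0\subset X''$ containing no further chord. Since $X_0$ also contains no crossings, $\alpha'$ lies in a single region of $S^2\setminus D$ adjacent to $c$'s edge $e_p$ on one specific side of $e_p$. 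Lifting $\alpha'$ through the projection produces an arc $\beta\subset F\cap S^+\cap S^-$ whose two endpoints both lie on the single long side of the edge 2-cell $E_p$ over $e_p$ on that same side---that is, on a single component of $\partial\nu L\cap S^+\cap S^-$. This contradicts Proposition \ref{P:EdgeLoopE} and completes the proof.

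The step I expect to be the main obstacle is the ``avoidance'' claim that $X$ is disjoint from every over/underpass and every hemisphere, in particular the propagation step forcing an over/underpass into $X$ whenever an adjacent hemisphere is in $X$, which requires careful local analysis of the $\partial C\cap\partial\nu L$ boundary arcs. Once that is established, the projection to $S^2$ and the innermost-chord argument are routine.
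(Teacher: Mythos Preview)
Your proof is correct, and the avoidance step you flagged as the main obstacle does go through: since $\gamma$ avoids overpasses (by hypothesis) and $\partial C$ (by Proposition~\ref{P:InnermostCrossingBall}), each overpass 2-cell and each lune of $\partial C\cap S^\pm$ lies wholly on one side of $\gamma$; the two $F$-arcs on an overpass rule it out of $X$, and a lune in $X$ drags the adjacent overpass in across their shared $\partial\nu L\cap\partial C$ arc. The projection to $S^2$ is then injective on $X$, and the outermost-chord argument cleanly yields an arc of $F\cap S^+\cap S^-$ with both endpoints on one long side of an edge-strip, contradicting Proposition~\ref{P:EdgeLoopE}.

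The paper's proof is quite different and much shorter: it invokes Lemma~\ref{L:EdgeTypes} (the alternating edge-pair classification) to see exactly what an arc of $\gamma\cap\partial\nu L$ confined to a single edge 2-cell must look like, and then observes directly from the picture---using innermostness, $\gamma\cap\partial C=\varnothing$, and Lemma~\ref{L:inessential}---that $\gamma$ is forced to traverse the overpass at the adjacent crossing where that edge meets an underpass. So the paper argues \emph{locally} at a single edge, while you argue \emph{globally} via the projected disk in $S^2$. Your route avoids both Lemma~\ref{L:EdgeTypes} and Lemma~\ref{L:inessential}, trading them for the elementary parity/outermost-chord combinatorics; once $\gamma\cap\partial C=\varnothing$ is in hand (the only place alternatingness enters, in either proof), your argument is diagram-agnostic. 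The paper's argument is more economical but leans on the figure-based edge classification.
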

\begin{proof}
Lemma \ref{L:EdgeTypes} 
implies that the only arcs of $F\cap\partial\nu L$ with endpoints on the same component of $\nu L\cap S^2\setminus C$ look like the arcs of this type in Figure \ref{Fi:EdgeTypes}, up to reflection---there is one far left in Figure \ref{Fi:EdgeTypes}, one second from right, and two far right.  If such an arc lies on an innermost circle $\gamma$ of $F\cap S^+$ (\textsc{wlog}), then, since $\gamma$ is innermost and $\gamma\cap\partial C=\varnothing$ by Proposition \ref{P:InnermostCrossingBall},  $\gamma$ must traverse the overpass at the crossing where the edge of $\partial\nu L$ containing this arc of $\gamma$ meets an underpass. This is evident in Figure \ref{Fi:EdgeTypes}, using Lemma \ref{L:inessential}.
\end{proof}
\begin{lemma}\label{L:TwoInnermosts}
If both circles of $F\cap S^\pm$ traversing a given over/underpass have height 0 (to one side), i.e. are innermost, then $F$ can be isotoped to have a standard tube near that crossing.
\end{lemma}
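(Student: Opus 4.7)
The plan is to use the innermost hypothesis and Proposition \ref{P:InnermostCrossingBall} to pin down, up to bigon moves, the local structure of $F$ around $C_t$; then to rule out an ``adjacent-quadrant'' configuration via Lemma \ref{L:inessentialCase} and normalize the surviving ``opposite-quadrant'' configuration to a standard tube.

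First I would invoke Lemma \ref{L:inessentialCase} to obtain that the two arcs $\beta_1,\beta_2\subset F\cap\partial\nu L$ traversing the over/underpass in question (\textsc{wlog} an overpass, so in $S^+$) lie on distinct circles $\gamma_1,\gamma_2\subset F\cap S^+$, each bounding an innermost disk $X_i\subset S^+\setminus F$; Proposition \ref{P:InnermostCrossingBall} gives $X_i\cap\partial C=\varnothing$. Because $\beta_1,\beta_2$ are the only arcs of $F\cap\partial\nu L\cap S^+$ on the overpass strip, the disk $X_i$ can neither meet the strip on the inner side of $\beta_i$ (else $\gamma_i\supset\beta_j$ for $j\neq i$) nor extend to the long side of the strip on $\partial C_t$ (else $\gamma_i$ meets $\partial C$). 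Consequently each arc of $\gamma_i\cap S^+\cap S^-$ emerging from an endpoint of $\beta_i$ must lie in one of the two quadrants adjacent to that endpoint, and by innermost-ness it sits close to $C_t$.

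Applying Propositions \ref{P:EdgeLoopB}, \ref{P:EdgeLoopE}, \ref{P:CrossingLoopE} and Lemma \ref{L:EdgeCrossingLoop} then restricts the combined configurations of $\gamma_1$ and $\gamma_2$ near $C_t$ to two essentially different cases. In the \emph{adjacent-quadrant case} (Figure \ref{Fi:TwoInnermosts}, second from left), the four exit arcs together occupy only two quadrants, one on each side of the overstrand. I would perform a bigon move along an appropriate arc in $S^\pm\setminus\pi^{-1}(C)$---this move increases $|F\cap S^+\cap S^-|$ but preserves disk components by Lemma \ref{L:Move}, since the bigon arc meets $\partial\nu L$---to expose an arc in $\partial\nu L\cap S^\pm$ whose endpoints lie on the same circle of $F\cap S^\pm$ but on distinct circles of $F\cap\partial\nu L$, directly contradicting Lemma \ref{L:inessential}. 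This kills the adjacent-quadrant case.

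In the \emph{opposite-quadrant case} (Figure \ref{Fi:Smoothing}), the exit arcs of $\gamma_1$ and $\gamma_2$ together occupy all four quadrants, in two opposite pairs. I would perform a short sequence of bigon moves, validated by Lemma \ref{L:MoveCase}, sliding the two exit arcs in one opposite pair parallel to $\partial C_t$ until their endpoints reach the points of $F\cap\partial\nu L\cap S^+\cap S^-$ closest to $C_t$ in each of the four relevant directions along $\partial\nu L$. The two resulting arcs are then parallel through $S^+\cap S^-\setminus F$ to $C_t$ with the required endpoints, which is exactly a standard tube. The main obstacle will be verifying that the tidying sequence in the opposite-quadrant case terminates correctly, i.e.\ that any intervening arc of $F\cap S^+\cap S^-$ between an exit arc and $C_t$ either provides the next bigon move in the sequence or, by Lemma \ref{L:inessential}, cannot exist in the first place.
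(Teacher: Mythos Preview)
Your proposal is correct and follows essentially the same two-case strategy as the paper: rule out the adjacent-quadrant configuration by bigon moves plus Lemma~\ref{L:inessential}, and realize the opposite-quadrant configuration as a tube via bigon moves validated by Lemmas~\ref{L:Move} and~\ref{L:MoveCase}. The one difference worth noting is how the case split is set up: rather than tracking where the innermost circles $\gamma_1,\gamma_2$ exit into $S^+\cap S^-$ (your ``exit arcs,'' whose proximity to $C_t$ is not actually forced by innermost-ness alone---an innermost disk can run long and thin along an edge of $\partial\nu L$), the paper looks instead at the two edge-pairs incident to the \emph{underpass} at $C_t$, invokes alternatingness to guarantee that each carries a point of $F\cap\partial\nu L\cap S^+\cap S^-$, and splits on whether the two such points nearest $C_t$ lie in adjacent or opposite quadrants. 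This buys an unambiguous case split, makes the (two, not one) bigon moves in each case completely explicit, and dissolves the tidying ``obstacle'' you flag at the end.
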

\begin{figure}
\begin{center}
\includegraphics[width=3in]{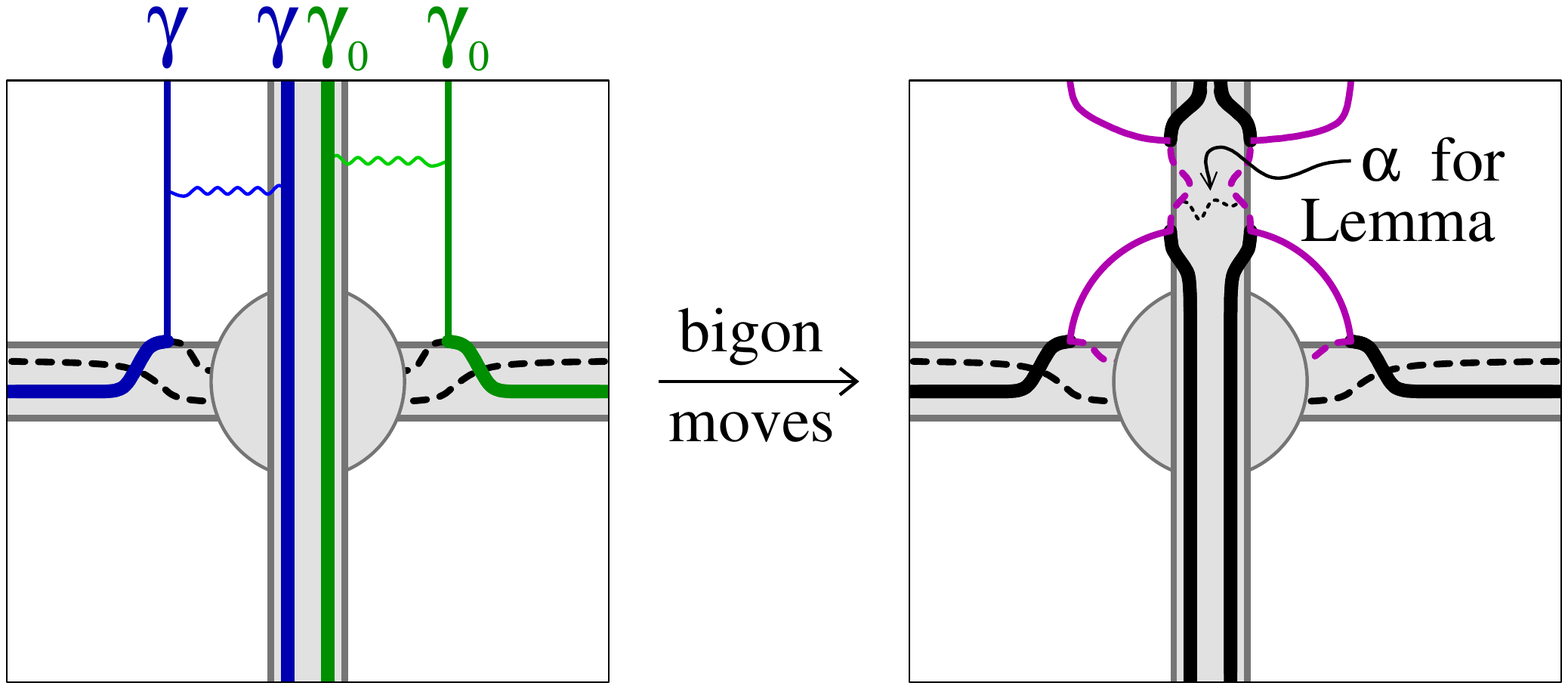}\hspace{.45in}
\includegraphics[width=3in]{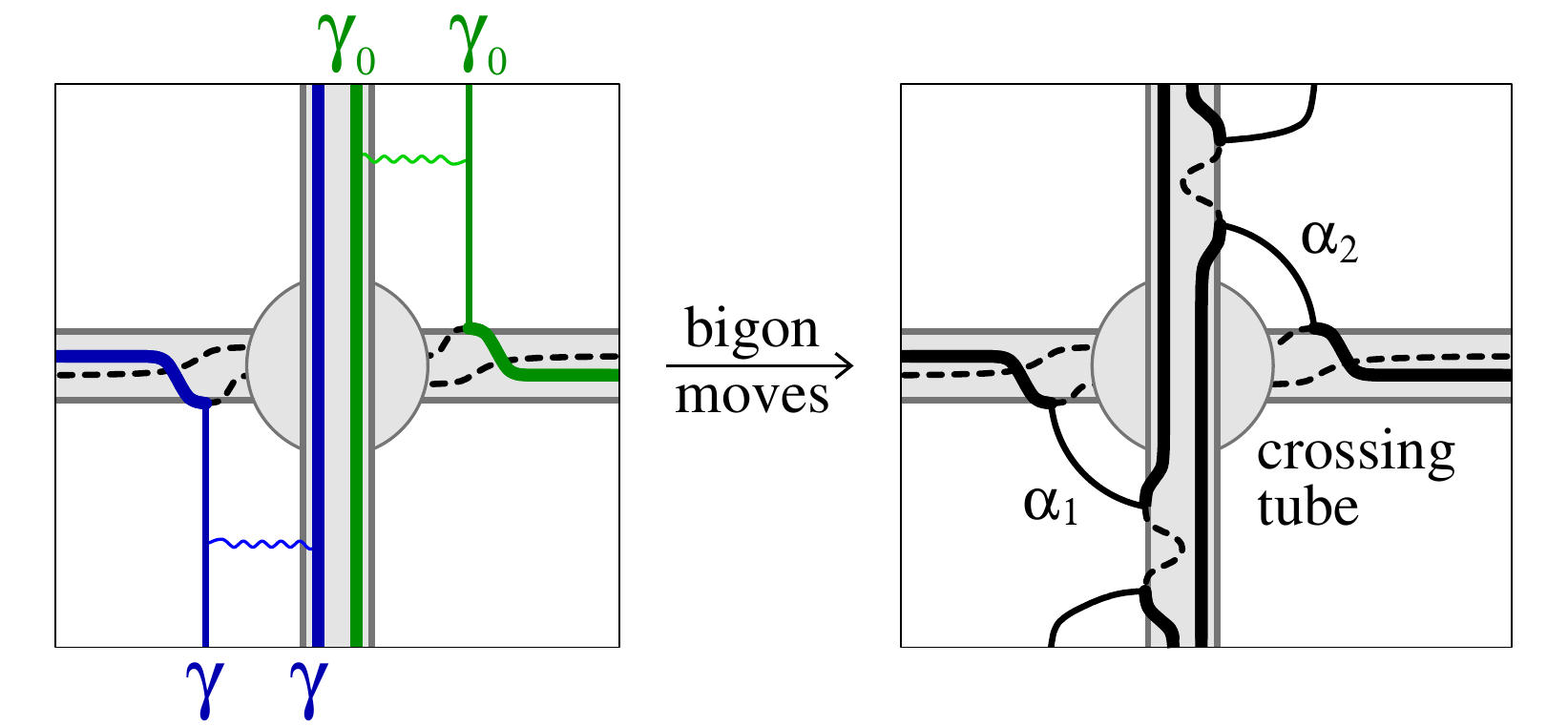}
\caption{If both circles of $F\cap S^\pm$ traversing a given over/underpass, say at $C_t$, have height zero, then $F$ can be isotoped to have a standard tube near $C_t$.}
\label{Fi:TwoInnermosts}
\end{center}
\end{figure}
\begin{proof}
Let $C_t$ be the ball at the crossing in question. Recall that the two arcs traversing the overpass (\textsc{wlog}) at $C_t$ lie on distinct circles of $F\cap S^+$, by Lemma \ref{L:inessentialCase}. Consider the two edge-pairs of $\partial\nu L$ that are incident to the underpass at $C_t$.  Because $D$ is alternating, each edge-pair contains endpoints of $F\cap S^+\cap S^-$. Consider the endpoint on each edge-pair that is nearest to $C_t$ in $\partial\nu L$---recall that $F\cap\partial\nu L$ is (the image of) a section of the disk-bundle $\nu L\to L$.  Up to symmetry, there are two cases, depending on which sides of $D$ these two points lie on, relative to each other (cf. Figure \ref{Fi:TwoInnermosts}). 

If these two points lie in adjacent quadrants near $C_t$ (left in Figure \ref{Fi:TwoInnermosts}), perform two bigon moves (unless the associated arc is parallel in $S^\pm\setminus C$ to $F$). Since each (possible) bigon move follows an arc with an endpoint on $\partial\nu L$, Lemma \ref{L:Move} implies that these moves preserve the fact that $S^+\cup S^-\cup \nu L$ cuts $F$ into disks.  Moreover, they produce a diagram in which an arc $\alpha\subset\partial\nu L\cap S^-$ has endpoints on the same circle of $F\cap S^-$ but on distinct circles of $F\cap\partial\nu L$, which Lemma \ref{L:inessential} states is impossible.

Therefore, these two points must lie in opposite quadrants $F\setminus D$ near $C_t$ (right in Figure \ref{Fi:TwoInnermosts}).  In this case, a pair of bigon moves (to be omitted if trivial) immediately fashions a standard tube near $C_t$.  Lemma \ref{L:MoveCase} implies that these moves preserve the fact that $S^+\cup S^-\cup \nu L$ cuts $F$ into disks.
\end{proof}
\section{Main results}\label{S:4}
\begin{ctlemma}
Given a non-trivial, connected, reduced alternating diagram of a link $L$ and a closed, essential surface $F\supset L$, 
there exists an isotopy after which $F$ has a standard {tube} near some crossing.
\end{ctlemma}
\begin{proof}
As in \textsection\ref{S:2}, let $L\subset (S^2\setminus C)\cup\partial C$ follow a reduced alternating diagram, with $\nu L$ a closed regular neighborhood of $L$ seen as (the total space of) a disk-bundle $\pi:\nu L\to L$, $B^\pm$ the components of $S^3\setminus (S^2\cup \text{int}(C\cup\nu L))$, and $S^\pm=\partial B^\pm$. Let $F$ be a closed, essential surface containing $L$.  Fixing $L\subset F$, $S^2$, and $C$, isotope $F$---subject to the requirements that $F\pitchfork S^+,S^-$; $\pi|_F$ is a bundle map; and $F\cap C\cap\partial\nu L=\varnothing$---so as to minimize lexicographically the numbers of components of $F\cap\partial C\setminus\nu L$ and $F\cap S^+\cap S^-$. 

Consider $F\cap S^+$.  If all circles have height 0, apply Lemma \ref{L:TwoInnermosts} at any overpass, done.   Otherwise, by Proposition \ref{P:Height1Exists}, there exists a circle $\gamma_1$ of $F\cap S^+$ with height 1.  Let $\gamma_0$ be any (innermost) circle enclosed by $\gamma_1$.  Apply Proposition \ref{P:InnermostPass} to consider an overpass which $\gamma_0$ traverses. Let $\gamma$ denote the other circle of $F\cap S^+$ traversing this overpass.  Note that $\gamma\neq\gamma_0$ by Lemma \ref{L:inessential}, specifically Lemma \ref{L:inessentialCase}.  If $\gamma$ has height 0, then Lemma \ref{L:TwoInnermosts} completes the proof.  Otherwise, $\gamma$ must equal $\gamma_1$.
See Figure \ref{Fi:FinalMove}.

\begin{figure}
\begin{center}
\includegraphics[width=6.5in]{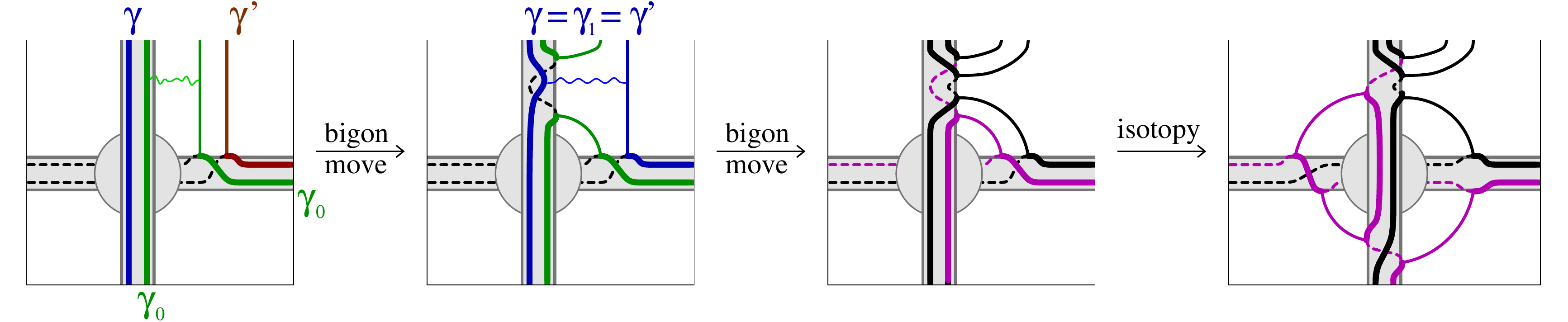}
\caption{The final sequence of moves in the proof of the crossing tube lemma.}
\label{Fi:FinalMove}
\end{center}
\end{figure}
Next, consider the circle $\gamma'$ of $F\cap S^+$ from Figure \ref{Fi:FinalMove}, which must exist and be distinct from $\gamma_0$, due to Lemmas \ref{L:EdgeTypes}, \ref{L:inessential}, and the assumption that $\gamma_0$ has height 0. 
If $\gamma'$ also has height 0, then Lemma \ref{L:inessential} 
 implies that the edge of $\partial\nu L$ in question must appear as in Figure \ref{Fi:EdgeTypes}, second from right; thus, $\gamma_0$ and $\gamma'$ must traverse a common overpass, completing the proof, using Lemma \ref{L:TwoInnermosts}.  
Otherwise, $\gamma'=\gamma_1=\gamma$.  This allows the sequence of isotopy moves shown in Figure \ref{Fi:FinalMove}, yielding the desired crossing tube. (Again, omit either isotopy move if the associated arc is parallel in $S^\pm\setminus C$ to $F$; and Lemma \ref{L:MoveCase} applies since each bigon move is along an arc with an endpoint on $\partial\nu L$.)
\end{proof}
%
%
\begin{maintheorem}
Every non-split, non-trivial alternating link $L$ has representativity $r(L)=2$.
\end{maintheorem}
\begin{proof}
Beginning with a {reduced} alternating diagram of $L$, apply the crossing tube lemma to obtain a standard tube at some crossing.  Then apply Lemma \ref{L:sufficient} to conclude that the crossing tube contains a disk $Z$ with $Z\cap F=\partial Z$, such that $\partial Z$ intersects $L$ in two points and does not bound a disk in $F$.  
\end{proof}
\section{Conclusion}\label{S:5}
\begin{figure}
\begin{center}
\includegraphics[height=3.5in]{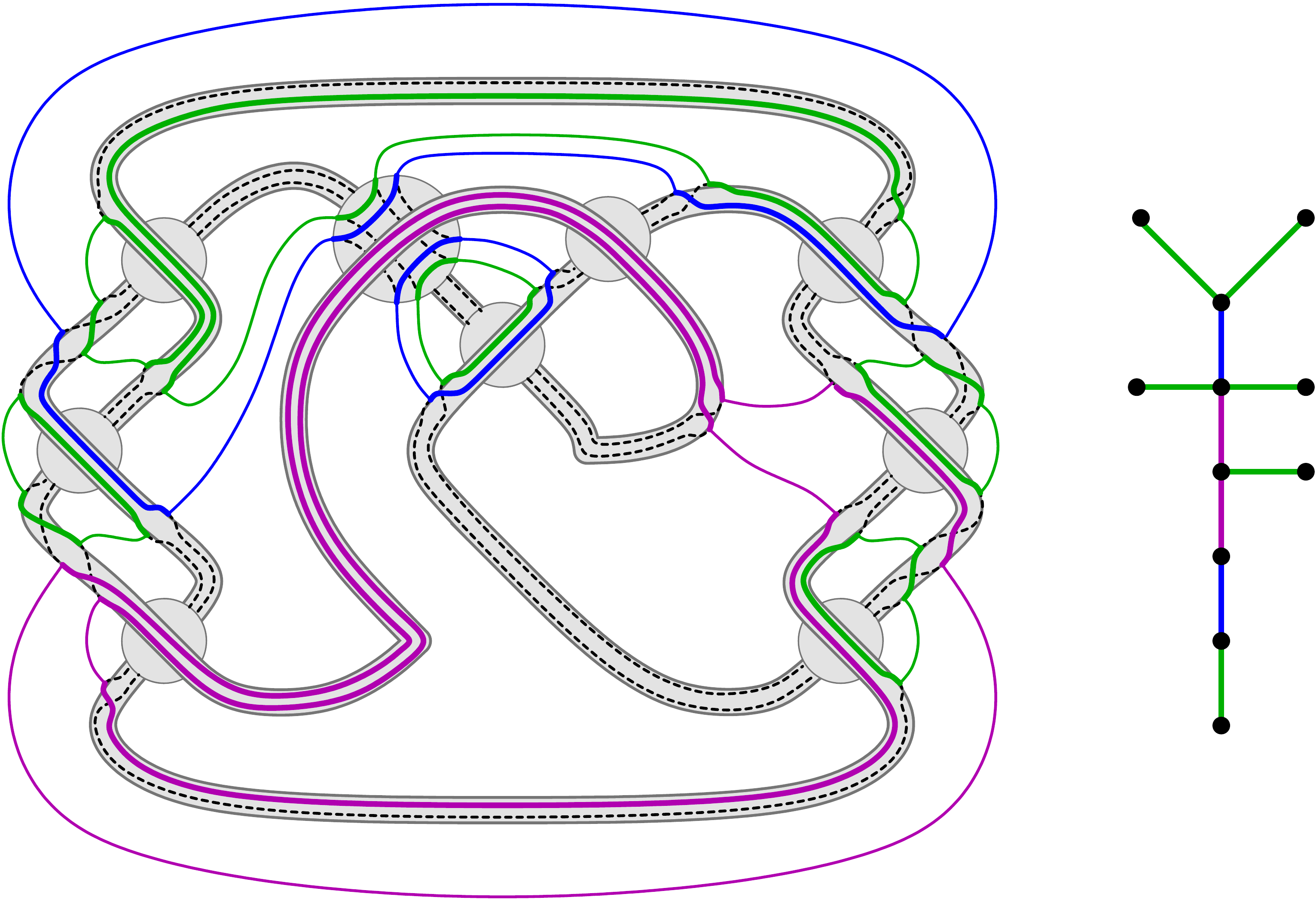}
\caption{A torus $F$ containing the knot $T_{3,4}$ in an almost-alternating diagram, with the 
graph $G^+$.  There are two circles of $F\cap S^+$ with height 2, two with height 1, and six 
 with height 0.}\label{Fi:T34}
\end{center}
\end{figure}
Figure \ref{Fi:T34} shows a torus $F$ containing the knot $L=T_{3,4}$, with $L$ in an almost-alternating diagram. The torus is difficult to visualize directly from this diagram, but its homeomorphism type is straightforward to verify by computing euler characteristic. As $r(L)\geq r(F,L)=3>2$,
this example illustrates how the arguments leading up to the crossing tube lemma break down without alternatingness. The configuration satisfies the conclusions of all Propositions and Lemmas from \textsection\ref{S:31}, but escapes those from \textsection\ref{S:32} (regarding edge types and innermost possibilities). 

If an almost-alternating link $L$ has representativity $r(L)\geq 3$, then there can be no crossing tube in any diagram with minimal crossing number. Thus, after applying the setup and results from \textsection2, \textsection3.1, all non-innermost circles of $F\cap S^+$ and $F\cap S^-$ must (properly enclose or) be incident to the ``de-alternator.'' Otherwise, a local application of the proof of the crossing tube lemma gives $r(F)\leq 2$.  Likewise, at least one of the two circles traversing each over/underpass must be incident to (or properly enclose) the de-alternator.  Perhaps it is feasible to list explicitly which almost alternating links have representativity $\geq 3$.



%
\end{document}